\numberwithin{equation}{section}
\title{On the Representation of Holomorphic Functions  on Polyhedra}
\author{Jim Agler
\thanks{Partially supported by National Science Foundation Grant
DMS 1068830}
\and
John E. M\raise.5ex\hbox{c}Carthy
\thanks{Partially supported by National Science Foundation Grants DMS 0966845 and 1300280}
\and
N. J. Young
\thanks{Supported by London Mathematical Society grant 41219 and Engineering and Physical Sciences Research Council grant
EP/J004545/1}}
\renewcommand{\epsilon}{\varepsilon}
\def\norm#1{\| #1 \|}
\def\normdel#1{{\| #1 \|}_\delta}
\def\normmdel#1{\| #1 \|^-_\delta}
\def\normpdel#1{\| #1 \|^+_\delta}
\def\normpgam#1{\| #1 \|^+_\gamma}
\def\normm#1{{\| #1 \|}_m}
\def\normbm#1{{\| #1 \|}_{\bf m}}
\def\normtdel#1{{\| #1 \|}_{t\delta}}
\def\normptdel#1{\| #1 \|_{t\delta}^+}
\def\be{\begin{equation}}
\def\ee{\end{equation}}
\def\abs#1{\mid\!\! #1 \!\!\mid}
\def\set#1#2{\{ #1 \, | \, #2\}}
\def\cd{\mathbb{C}^d}
\def\cm{\mathbb{C}^m}
\def\dd{\mathbb{D}^d}
\def\dm{\mathbb{D}^m}
\def\cldm{{(\mathbb{D}^-)}^{m}}
\def\hinfofgd{H^\infty(G_\delta)}
\def\hinfdel{H_\delta^\infty}
\def\hinfm{H_{\bf m}^\infty}
\def\c{\mathcal{C}}
\def\h{\mathcal{H}}
\def\m{\mathcal{M}}
\def\lofh{\mathcal{L}(\mathcal{H})}
\def\hofg{\mathcal{H}(G)}
\def\rofg{\mathcal{R}(G)}
\def\pofg{\mathcal{P}(G)}
\def\gdel{G_\delta}
\def\kdel{K_\delta}
\def\gtdel{G_{t\delta}}
\def\cdel{{\mathcal{C}}_\delta}
\def\f{\mathcal{F}}
\def\fperp{\mathcal{F}^\perp}
\def\fdel{{\mathcal{F}}_\delta}
\def\fm{{\mathcal{F}}_m}
\def\ftdel{{\mathcal{F}}_{t\delta}}
\def\l{\lambda}
\begin{document}

\bibliographystyle{plain}

\maketitle
\begin{abstract}
We generalize the Oka extension theorem, and obtain bounds on the norm of the extension,
by using operator theory.
\end{abstract}

\newtheorem{defin}[equation]{Definition}
\newtheorem{lem}[equation]{Lemma}
\newtheorem{prop}[equation]{Proposition}
\newtheorem{thm}[equation]{Theorem}
\newtheorem{claim}[equation]{Claim}
\newtheorem{ques}[equation]{Question}

\section{Introduction}
\subsection{Oka's Theorem}
The following beautiful theorem of Oka, that gives a representation for holomorphic functions defined on $p$-polyhedra in $\cd$, has played a significant 
role in the development of several complex variables.
\begin{thm}\label{thm1.0}
(Oka \cite{oka36} as presented in \cite{alewer}) Let $\delta_1,\ldots,\delta_m$ be a collection of polynomials in $d$ variables normalized so that the p-polyhedron, $K_\delta$, defined by
$$K_\delta=\set{\lambda \in \cd}{\abs{\delta_l(\lambda)} \le 1 \text{ for }l=1,\ldots,m}$$ lies in $\dd$. If $\phi$ is holomorphic on a neighborhood of $K_\delta$, then there exists a function $\Phi$, holomorphic on a neighborhood of ${(\mathbb{D}^-)}^{d+m}$, such that $$
\phi(\lambda)=\Phi(\lambda,\delta(\lambda))
$$ for all $\lambda \in K_\delta$.
\end{thm}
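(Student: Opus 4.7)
The plan is to transfer the problem from the polyhedron $K_\delta$ to the graph of $\delta$ inside $\cd \times \cm$, and then invoke classical Oka--Cartan theory to extend off this graph.

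First I would choose a bounded open neighborhood $W$ of $K_\delta$ in $\dd$ on which $\phi$ is holomorphic, and consider the graph map
\[
\iota : W \to \mathbb{C}^{d+m}, \qquad \iota(\lambda) = (\lambda, \delta_1(\lambda),\ldots,\delta_m(\lambda)).
\]
Its image $V = \iota(W)$ is a closed complex submanifold of $W \times \cm$ cut out by the polynomials $P_l(\lambda,z) = z_l - \delta_l(\lambda)$, and $\iota$ is a biholomorphism from $W$ onto $V$ with inverse given by projection to the first $d$ coordinates. The pullback $\tilde\phi := \phi \circ \iota^{-1}$ is holomorphic on $V$. For $\epsilon>0$ sufficiently small, the open polydisk of polyradius $(1+\epsilon,\ldots,1+\epsilon)$ in $\mathbb{C}^{d+m}$, call it $\Omega_\epsilon$, contains $\iota(K_\delta)$, and $V_\epsilon := V \cap \Omega_\epsilon$ is a closed analytic subvariety of $\Omega_\epsilon$.

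The remaining step, and the heart of the matter, is to extend $\tilde\phi$ from $V_\epsilon$ to a holomorphic function $\Phi$ on all of $\Omega_\epsilon$. Since $\Omega_\epsilon$ is a Stein manifold, this extension is supplied by Cartan's Theorem B applied to the short exact sheaf sequence
\[
0 \to \mathcal{I}_{V_\epsilon} \to \mathcal{O}_{\Omega_\epsilon} \to \mathcal{O}_{V_\epsilon} \to 0,
\]
via which the vanishing $H^1(\Omega_\epsilon,\mathcal{I}_{V_\epsilon}) = 0$ gives surjectivity of the restriction map on global sections. Restricting $\Phi$ to a neighborhood of ${(\mathbb{D}^-)}^{d+m} \subset \Omega_\epsilon$ and noting that $\Phi(\lambda,\delta(\lambda)) = \tilde\phi(\iota(\lambda)) = \phi(\lambda)$ for all $\lambda \in K_\delta$ gives the desired representation.

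The main obstacle is the extension step. Cartan's Theorem B is deep machinery; a more hands-on alternative is Weil's integral formula, which constructs an explicit Cauchy-type kernel in terms of the defining polynomials $\delta_l$ and produces the extension by integration. Either approach requires nontrivial analysis, and producing quantitative control on $\|\Phi\|$ would require additional work. The abstract suggests that the paper's purpose is precisely to bypass the classical machinery with an operator-theoretic construction that also delivers norm bounds on the extension.
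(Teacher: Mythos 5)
Your argument is the classical sheaf-theoretic proof, and it is not the route the paper takes to Theorem \ref{thm1.0}. The paper obtains the theorem as a corollary of its own machinery: under Oka's normalization the family ${\mathcal{F}}_{(t\lambda,t\delta)}$ is bounded for $t<1$ (since $T\in{\mathcal{F}}_{(t\lambda,t\delta)}$ forces $\norm{T^r}\le 1/t$), so Proposition \ref{thm1.2} --- itself a corollary of Theorem \ref{thm1.4}, whose proof runs through the Ambrozie--Timotin realization formula and the hereditary cone $\cdel$ rather than through coherent sheaves --- applies to the augmented tuple $(\lambda,\delta)$ and produces $\Phi$ with $\phi=\Phi(\lambda,\delta(\lambda))$, together with the quantitative bound $\normm{\Phi}<\rho(\phi)+\epsilon$ that is the point of the paper. (The paper also notes the geometric route via Theorem \ref{thm1.1}, which cites Rudin's extension theorem and is in substance your argument.) Your route buys brevity and familiarity at the cost of any norm control; the paper's route buys the bound, and the sharper $\hinfdel$ statement of Theorem \ref{thm1.5}, at the cost of the operator-theoretic apparatus of Sections \ref{sec3}--\ref{sec6}.

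One step in your write-up is asserted where it needs proof, and it is exactly the delicate point of the classical argument: the claim that $V_\epsilon=V\cap\Omega_\epsilon$ is \emph{closed} in $\Omega_\epsilon$ for small $\epsilon$. A sequence $(\lambda_n,\delta(\lambda_n))$ with $\lambda_n\in W$ can converge to $(\lambda_0,\delta(\lambda_0))$ with $\lambda_0\in\partial W$, and such a limit lies in $\Omega_\epsilon\setminus V_\epsilon$ whenever $\abs{\lambda_0^j}<1+\epsilon$ for all $j$ and $1<\abs{\delta_l(\lambda_0)}<1+\epsilon$ for some $l$; a non-closed $V_\epsilon$ would wreck the Theorem B step. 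To rule this out you must use that $W$ is bounded and $K_\delta\subseteq\dd$ is compact: the sets $A_\epsilon=\set{\lambda\in\partial W}{\abs{\lambda^j}\le1+\epsilon \text{ and } \abs{\delta_l(\lambda)}\le1+\epsilon \text{ for all }j,l}$ are compact and decrease to $\partial W\cap K_\delta=\emptyset$ as $\epsilon\downarrow0$, so some $A_\epsilon$ is empty. (Standard texts sidestep this by taking the graph over the enlarged polyhedron $\set{\lambda}{\abs{\lambda^j}<r,\ \abs{\delta_l(\lambda)}<r}$ for $r>1$ chosen so that the corresponding compact polyhedron lies in $W$; closedness in the polydisk of polyradius $r$ is then automatic.) With that repaired, the remainder of your proof is correct.
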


Introduced originally in 1936 to give an elegant new proof of the Oka-Weil Approximation Theorem(\cite{wei35} and \cite{oka36}), Oka's theorem was a stem theorem for the development of the theory of analytic sheaves, a powerful tool for doing function theory on domains of holomorphy and more generally Stein spaces (\cite{gur} or \cite{hor}). Basic to the understanding of polynomial convexity,
 the theorem played an important role in the development of the theory of Banach Algebras. 
Many operator theorists first learn of Oka's Theorem in the context of one of its many
basic implications, the Arens-Calderon Trick (\cite{arc55}), which is fundamental to the spectral theory and the corresponding functional calculus for commuting tuples of operators (\cite{tay70a},\cite{tay70b},\cite{eschput}).

\subsection{Oka Mappings}
In this paper we shall show how ideas that are currently evolving within the operator theory community can be adapted to obtain precise bounds for Oka's Theorem. These bounds are defined using operator theory and the problem of computing them or indeed, even estimating them, in any meaningful fashion in terms of function theory remains in large part unexplored.

In addition to these new bounds that we will obtain, there is a second contribution presented in this paper to our understanding of Oka's Theorem. The idea is to drop Oka's normalization requirement and, more severely, not to allow the representing function $\Phi$ to ``see'' the coordinates $\lambda$.
Specifically, we introduce the following definition.
\begin{defin}
Let $\delta$ be an $m$-tuple of polynomials in $d$ variables. We say that $\delta$ is an Oka mapping if whenever $\phi$ is a function that is holomorphic on a neighborhood of $K_\delta$, there exists a function $\Phi$, holomorphic on a neighborhood of ${(\mathbb{D}^-)}^{m}$, such that $\phi =\Phi \circ \delta$ on $K_\delta$.
\end{defin}
Evidently, with this language, Oka's theorem becomes the assertion that if $\delta$ is an $m$-tuple of polynomials in $d$ variables and $K_\delta \subseteq \dd$, then $(\lambda,\delta)$ is an Oka mapping. Of course, this leaves open the question of whether or not the map $\delta$ itself is an Oka map.

One approach to the understanding of Oka mappings is  to use the Cartan Extension Theorem (\cite{car51}), and this provides
%
 a purely geometric characterization of Oka mappings. We let
$$G_\delta=\set{\lambda \in \cd}{\abs{\delta_l(\lambda)} < 1 \text{ for }l=1,\ldots,m}.$$ 
One always has
 $\gdel$ equal to the interior of $\kdel$, which we shall denote
by ${\kdel}^\circ$,  --- this is proved in  Lemma~\ref{lem2.1};
but it need not be the case that $\kdel = {\gdel}^-$, the closure of $\gdel$.
\begin{thm}\label{thm1.1}
If $\delta$ is an m-tuple of polynomials in $d$ variables, then $\delta$ is an Oka mapping if and only if
 there exists $t<1$ such that $\delta$ embeds $\gtdel$ as an analytic submanifold in $\frac{1}{t}\dm$ (i.e. $\delta$ is an injective, proper, unramified mapping from $\gtdel$ into $\frac{1}{t}\dm$).
\end{thm}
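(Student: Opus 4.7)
The plan is to exploit the factorization of coordinate functions through $\delta$ for the forward direction, and to invoke the Cartan Extension Theorem for the converse.

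\textbf{Forward direction} ($\delta$ Oka $\Rightarrow$ embedding). I would apply the Oka property to each coordinate polynomial $\lambda \mapsto \lambda_j$, obtaining functions $\Phi_j$ holomorphic on a neighborhood of $\overline{\mathbb{D}}^m$ with $\lambda_j = \Phi_j\circ\delta$ on $\kdel$. Writing $\Phi=(\Phi_1,\dots,\Phi_d)$, the identity $\lambda = \Phi(\delta(\lambda))$ holds on $\kdel$ and, by the identity principle (using that $\gdel = \kdel^\circ$ provides open support), on a connected open neighborhood $U$ of $\kdel$. This identity immediately forces $\delta$ to be injective on $U$, and differentiating gives $I = D\Phi(\delta(\lambda))\,D\delta(\lambda)$, so $D\delta$ has full rank $d$ throughout $U$. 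Since $\kdel$ is compact and $\bigcap_{t<1}\gtdel = \kdel$, for $t$ sufficiently close to $1$ the sublevel set $\gtdel$ (or at least the relevant piece containing $\kdel$) lies inside $U$. Properness of $\delta\colon\gtdel\to\tfrac1t\mathbb{D}^m$ comes for free from the defining inequalities: any sequence in $\gtdel$ with no limit point in $\gtdel$ must have $|\delta_l|\to 1/t$ for some $l$, since otherwise the limit would still satisfy the strict inequalities.

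\textbf{Converse direction} (embedding $\Rightarrow$ $\delta$ Oka). Given $\phi$ holomorphic on a neighborhood $V$ of $\kdel$, I would choose $s\in(t,1)$ close enough to $1$ that $G_{s\delta}\subset V$ (again using that $G_{s\delta}\searrow\kdel$). The restriction $\delta\colon G_{s\delta}\to\tfrac1s\mathbb{D}^m$ inherits being injective, proper, and unramified from the hypothesis on $\gtdel$, so $\delta(G_{s\delta})$ is a closed analytic submanifold of the Stein domain $\tfrac1s\mathbb{D}^m$. The function $\phi\circ\delta^{-1}$ is well defined and holomorphic on this submanifold, so by the Cartan Extension Theorem it extends to $\Phi$ holomorphic on all of $\tfrac1s\mathbb{D}^m$. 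Since $s<1$, this set is a neighborhood of $\overline{\mathbb{D}}^m$, and by construction $\Phi\circ\delta = \phi$ on $G_{s\delta}\supset\kdel$, exhibiting $\delta$ as an Oka mapping.

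\textbf{Main obstacle.} The technical nuisance I expect is the nesting step in both directions, namely showing that for appropriate parameters the sublevel sets $\gtdel$ sit inside a prescribed neighborhood of $\kdel$. Because the $\delta_l$ are merely polynomials, $\gtdel$ can a priori have additional connected components or unbounded pieces far from $\kdel$, and one must carefully identify which component the statement refers to and rule out pathological behavior at infinity. Once this bookkeeping is in place, the forward implication reduces to a chain-rule computation and the converse to a clean invocation of Cartan's theorem on a Stein polydisk.
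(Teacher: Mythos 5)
Your strategy coincides with the paper's: the forward implication is obtained by applying the Oka property to the coordinate functions $\lambda^j$, and the converse by extending a function off the closed submanifold $\delta(\gtdel)$ of a polydisc (the paper cites Rudin, Thm.~7.1.5, where you cite Cartan; these play the same role). The converse half of your argument is essentially sound. Note only that the inclusion $G_{s\delta}\subseteq V$ does not follow from ``$G_{s\delta}\searrow\kdel$'' alone; it does follow from the hypothesis, because properness of $\delta:\gtdel\to\frac1t\dm$ makes $K_{s\delta}=\delta^{-1}\bigl(\tfrac1s(\mathbb{D}^-)^m\bigr)$ compact for $t<s\le 1$, and a decreasing family of compacta with intersection $\kdel\subseteq V$ is eventually contained in $V$.

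The forward direction, however, has genuine gaps. First, you assume $\kdel$ is compact and that $\gtdel$ eventually lies in any prescribed neighborhood $U$ of $\kdel$; but the paper notes explicitly (after Lemma~\ref{lem2.1}) that $\kdel$ need not be compact and $\gdel$ need not be bounded. Compactness of $\kdel$ and boundedness of $\gtdel$ are consequences of the theorem being proved, not standing facts, so they must be derived from the Oka hypothesis rather than assumed. The same possible unboundedness defeats your properness argument: a sequence can leave every compact subset of $\gtdel$ by escaping to infinity with every $|\delta_l|$ bounded away from $1/t$, so properness is not ``for free.'' Second, the identity principle does not yield $\lambda=\Phi(\delta(\lambda))$ on ``a connected open neighborhood of $\kdel$.'' The identity is known only on $\kdel$, which is a set of uniqueness only through its interior $\gdel$; the paper warns that $\gdel^-$ may be a proper subset of $\kdel$ (indeed $\gdel$ can be empty while $\kdel$ is not), and in any case the identity propagates only to those connected components of $\gtdel$ that meet $\gdel$. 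Since the theorem asserts injectivity, properness and unramifiedness on \emph{all} of $\gtdel$, your parenthetical retreat to ``the relevant piece containing $\kdel$'' proves a weaker statement than the one claimed. What you have filed under bookkeeping is in fact the substantive content of this implication: one must show that the existence of the $\Phi_j$ forces, for some $t<1$, both the boundedness of $\gtdel$ and the validity of $\Phi\circ\delta=\mathrm{id}$ on every component of $\gtdel$, and neither is addressed by your outline.
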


The implication $\Leftarrow$
follows from \cite[Thm. 7.1.5]{rud69}, and the converse from observing that, for some $t$, for each
coordinate function $\lambda^j$, $1 \leq j \leq d$, there is a function $\Phi_j$ holomorphic on $\frac{1}{t}\dm$
such that $\Phi_j (\delta (\lambda)) = \lambda^j$.

To return to operator theory,
we consider an analog of $\kdel$ but with points in $\cd$ replaced by $d$-tuples of pairwise commuting operators, $T=(T^1,T^2,\ldots,T^d)$, acting on a complex Hilbert space. Thus, we define
\be\label{eq1.0}
{\mathcal{F}}_\delta = \set{T}{\norm{{\delta}_l(T)} \le 1, l=1,\ldots,m}.
\ee
Already, with this one simple definition we obtain a second condition for $\delta$ to be an Oka map, now in operator theoretic terms.
\begin{prop}\label{thm1.2}
If $\delta$ is an m-tuple of polynomials in $d$ variables, then $\delta$ is an Oka mapping if and only if there exists $t<1$ such that $\ftdel$ is bounded.
\end{prop}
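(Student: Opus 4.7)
The plan is to deduce Proposition~\ref{thm1.2} from Theorem~\ref{thm1.1}, by relating the operator-theoretic boundedness of $\ftdel$ to the injective, proper, unramified embedding condition on $\delta\colon \gtdel\to \frac{1}{t}\dm$.

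For the implication $(\Leftarrow)$, assume that $\ftdel$ is bounded for some $t<1$, and verify the three geometric conditions of Theorem~\ref{thm1.1}. For injectivity, suppose $\lambda_1,\lambda_2\in K_{t\delta}$ satisfy $\delta(\lambda_1)=\delta(\lambda_2)$, and form the commuting $d$-tuple on $\mathbb{C}^2$ defined by
\[
T^j \;=\; \lambda_1^j E_{11}+\lambda_2^j E_{22}+\alpha(\lambda_1^j-\lambda_2^j)E_{12},\qquad j=1,\ldots,d.
\]
Direct computation of $[T^i,T^j]$ shows that commutation is precisely the constraint forcing the off-diagonal entries to be scalar multiples of $\lambda_1-\lambda_2$; an induction on degree then yields $p(T)=p(\lambda_1)E_{11}+p(\lambda_2)E_{22}+\alpha(p(\lambda_1)-p(\lambda_2))E_{12}$ for every polynomial $p$. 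In particular $\delta_l(T)=\delta_l(\lambda_1)I$, so $T\in\ftdel$ for every $\alpha\in\mathbb{C}$, while $\|T^j\|$ grows linearly in $|\alpha|$ unless $\lambda_1^j=\lambda_2^j$ for every $j$. The unramified property at $\lambda_0\in\gtdel$ follows by the same device applied to $T^j=\lambda_0^j I+\alpha v^j E_{12}$, where $v$ lies in the kernel of the Jacobian $d\delta|_{\lambda_0}$; here the identity $p(T)=p(\lambda_0)I+\alpha(\nabla p(\lambda_0)\cdot v)E_{12}$ reduces $\delta_l(T)$ to $\delta_l(\lambda_0)I$, placing $T$ in $\ftdel$ while $\|T^j\|$ blows up with $|\alpha|$. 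Properness of $\delta\colon\gtdel\to\frac{1}{t}\dm$ is then obtained from the inclusion of scalar tuples in $\ftdel$, which forces $K_{t\delta}$ to be bounded; any sequence in $\gtdel$ whose $\delta$-image converges in $\frac{1}{t}\dm$ has a subsequential limit $\lambda_0\in K_{t\delta}$, and the target constraint $|\delta_l(\lambda_0)|<1/t$ forces $\lambda_0\in\gtdel$.

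For $(\Rightarrow)$, assume $\delta$ is Oka and apply Theorem~\ref{thm1.1} to obtain $t<1$ with $\delta$ embedding $\gtdel$ as a submanifold of $\frac{1}{t}\dm$. The remark following Theorem~\ref{thm1.1} then supplies, for each coordinate $j$, a holomorphic $\Phi_j$ on $\frac{1}{t}\dm$ satisfying $\Phi_j\circ\delta=\lambda^j$ on $\gtdel$. Fix $s$ with $t<s<1$ and take $T\in\fsdel$, so $\|\delta_l(T)\|\le 1/s$. Expanding $\Phi_j(z)=\sum_k a_k z^k$ on $\frac{1}{t}\dm$, the Cauchy inequalities on a polydisc of radius $1/t'$ with $t<t'<s$ give $|a_k|\le M(t')^{|k|}$, and since $\|\delta(T)^k\|\le s^{-|k|}$ by submultiplicativity, the series $\sum_k a_k\delta(T)^k$ converges in norm to an operator of norm at most $M/(1-t'/s)^m$. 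Taylor's holomorphic functional calculus identifies this operator with $T^j$: the joint spectrum satisfies $\sigma(T)\subseteq K_{s\delta}\subseteq\gtdel$, and on this neighborhood the holomorphic functions $\Phi_j\circ\delta$ and $\lambda^j$ coincide. Thus $\|T^j\|$ is uniformly bounded on $\fsdel$.

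The main technical obstacle is in $(\Rightarrow)$: invoking the Taylor holomorphic functional calculus to identify $\Phi_j(\delta(T))$ with $T^j$ requires both the spectral mapping theorem for the Taylor joint spectrum and the compositional property of the calculus. The elementary Cauchy-series estimate bypasses the machinery for the norm bound, but the operator identity itself rests on the compositional structure; alternatively one could argue by polynomial approximation of $\Phi_j$ together with a continuity argument inside $\fsdel$. The backward direction, by contrast, is essentially elementary once the $2\times 2$ matrix construction is in hand, the only routine subtlety being the inductive verification of the formulas for $p(T)$.
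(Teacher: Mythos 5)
Your proof is correct, but it takes a genuinely different route from the paper's. The paper obtains Proposition~\ref{thm1.2} as a corollary of the quantitative extension theorem (Theorem~\ref{thm1.4}, itself resting on the realization formula behind Theorem~\ref{thm1.5}): if $\delta$ is Oka, applying Theorem~\ref{thm1.4} to the coordinate functions $\lambda^j$ together with Lemma~\ref{lem6.1} gives $\sup_{T\in\ftdel}\|T^j\|<\infty$ for some $t<1$; conversely, boundedness of $\ftdel$ yields $\normptdel{\phi}<\infty$ by the direct-sum argument of Proposition~\ref{prop2.2}, and Theorem~\ref{thm1.4} then manufactures the extension $\Phi$. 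You instead route everything through the geometric characterization of Theorem~\ref{thm1.1}, showing directly that boundedness of $\ftdel$ is equivalent to the injective/proper/unramified condition: your $2\times 2$ upper-triangular tuples (the two-point and first-order-jet evaluations) correctly show that a failure of injectivity or of immersivity produces unbounded tuples in $\ftdel$, the scalar tuples give compactness of $K_{t\delta}$ and hence properness, and the forward direction is a clean Cauchy-estimate/power-series argument once one grants the compositional property of the Taylor calculus. What each approach buys: the paper's stays entirely within the operator-theoretic framework (realizations, Ambrozie--Timotin) and delivers the norm bounds at the same time, so Proposition~\ref{thm1.2} comes for free; yours imports the Cartan/Oka extension theorem hidden in the $\Leftarrow$ direction of Theorem~\ref{thm1.1}, but in exchange it explicitly links the two characterizations of Oka mappings, which the paper never does. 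One point worth making explicit in your forward direction: the definition of an Oka mapping only guarantees $\Phi_j\circ\delta=\lambda^j$ on $K_\delta$, whereas identifying $\sum_k a_k\delta(T)^k$ with $T^j$ for $\sigma(T)\subseteq K_{s\delta}$ needs the identity on all of $\gtdel$; you are therefore relying on the stronger form asserted in the remark following Theorem~\ref{thm1.1}, which is legitimate but should be flagged.
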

We remark that the classical Oka Theorem, Theorem \ref{thm1.0} above, is an immediate corollary of both Theorems \ref{thm1.1} and \ref{thm1.2}. This is due to the facts that for $t<1$, sufficiently close to $1$,  Oka's normalization condition, $K_\delta \subseteq \dd$ implies that $\gtdel 
\subseteq \dd$ and clearly, the map, $(t\lambda,t\delta)$, is  an analytic embedding of $\gtdel$ into $\frac{1}{t}{\mathbb D}^{m+d}$. Likewise, the family ${\mathcal{F}}_{(t\lambda,t\delta)}$ is bounded, as \eqref{eq1.0} implies that if $T \in {\mathcal{F}}_{(t\lambda,t\delta)}$, then for each $r=1,\ldots,d$, $\norm{T^r} \le \frac{1}{t}$.

Now, not all $\delta$ are Oka mappings. For an $m$-tuple of polynomials in $d$ variables, $\delta$, not necessarily assumed to be an Oka mapping, it is natural to ask the following question.
\begin{ques}
\label{qu1p6}
Given $\phi$ holomorphic on a neighborhood of $\kdel$, does there exist $\Phi$, holomorphic on  a neighborhood of ${(\mathbb{D}^-)}^{m}$, such that $\phi =\Phi \circ \delta$ on $K_\delta$?
\end{ques}

The approach to this question via the Cartan Extension Theorem would go something like this. First, we would hope that for $t<1$, sufficiently close to $1$, that $\delta(\gtdel)$ is an analytic variety in $\frac{1}{t}\dm$. Then the condition to  represent $\phi$ as in Oka's theorem
 would be that the function, $\delta^\sim$, defined on $\delta(\gtdel)$ by the formula,
$$\delta^\sim(\delta(\lambda)) = \phi(\lambda),$$
be a well defined analytic function on $\delta(\gtdel)$ that could be extended via the Cartan Theorem. The analysis of the analyticity of $\delta^\sim$ would require the full strength of analytic sheaf theory and would proceed with great difficulty. A fundamental problem with this approach, however, is that $\delta(\gtdel)$ need not be an analytic variety in $\frac{1}{t}\dm$ for any $t \leq1$.
For example, if $d=n=2, \ \delta_1=\lambda^1,\delta_2=\lambda^1\lambda^2$, then
\[
\delta (\gtdel) \ = \
\{ \lambda \, : \, |\l^1 | < \frac{1}{t},\, |\l^2 | < \frac{1}{t} \}
\setminus
\left[
\{ 0 \} \times \frac{1}{t} ( \dd \setminus \{ 0 \} ) \right]
.
\]

To answer the question in operator theory terms we return to $\fdel$ and notice that as a simple consequence of the Spectral Mapping Theorem we have that $\sigma(T) \subseteq K_\delta$ whenever $T \in \fdel$. Thus, if $\phi$ is holomorphic on a neighborhood of $\kdel$, then $\phi(T)$ can be defined by the Taylor functional calculus. Consequently, for $\phi$ holomorphic on a neighborhood of $\kdel$ we may define
\be \label{eq1.1}
{\normpdel{\phi}} \ = \ \sup_{T \in {\mathcal{F}}_\delta} \norm{\phi(T)}.
\ee
Question \ref{qu1p6} can be answered in terms of the quantity defined in \eqref{eq1.1}.
\begin{prop}\label{thm1.3}
If $\delta$ is an m-tuple of polynomials in $d$ variables and $\phi$ is holomorphic on a neighborhood of $\kdel$, then there exists $\Phi$ holomorphic on  a neighborhood of ${(\mathbb{D}^-)}^{m}$ such that $\phi =\Phi \circ \delta$ on $K_\delta$
 if and only if there exists $t<1$ such that $\normptdel{\phi} < \infty.$
\end{prop}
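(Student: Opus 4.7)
For the forward direction, suppose $\phi=\Phi\circ\delta$ on $\kdel$ with $\Phi$ holomorphic on an open neighborhood $U$ of $\cldm$. By compactness of $\cldm$ there exists $s>1$ with $(s\mathbb{D})^m\subset U$; choose $t\in(1/s,1)$ so $1/t<s$. For any $T\in\ftdel$ we have $\norm{\delta_l(T)}\le 1/t$, and expanding $\Phi(z)=\sum_\alpha c_\alpha z^\alpha$ as a Taylor series about the origin on $(s\mathbb{D})^m$, Cauchy's estimates yield $|c_\alpha|\le C r^{-|\alpha|}$ for any $r\in(1/t,s)$. Submultiplicativity for the commuting operators $\delta_l(T)$ gives $\norm{\delta(T)^\alpha}\le(1/t)^{|\alpha|}$, whence
\[
\norm{\Phi(\delta(T))}\ \le\ \sum_\alpha|c_\alpha|(1/t)^{|\alpha|}\ \le\ C\sum_\alpha(1/(tr))^{|\alpha|}\ <\ \infty
\]
uniformly in $T$. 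Since $\phi$ and $\Phi\circ\delta$ agree on $\kdel$ and hence on its open interior $\gdel$ (Lemma~\ref{lem2.1}), the Taylor functional calculus identifies $\phi(T)=\Phi(\delta(T))$, giving $\normptdel{\phi}<\infty$.

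For the reverse direction, assume $\normptdel{\phi}=M<\infty$. The heart of the argument is a $2\times 2$ matrix trick. Given $\lambda_1,\lambda_2\in K_{t\delta}$ with $\delta(\lambda_1)=\delta(\lambda_2)=w$ and arbitrary $c\in\mathbb{C}$, form the commuting $d$-tuple on $\mathbb{C}^2$
\[
T^j\ =\ \begin{pmatrix} \lambda_1^j & c(\lambda_1^j-\lambda_2^j) \\ 0 & \lambda_2^j \end{pmatrix},\qquad 1\le j\le d;
\]
commutativity holds because every off-diagonal entry is a scalar multiple of the single vector $\lambda_1-\lambda_2$. A monomial induction (using $c[\lambda_1^j(\lambda_1^k-\lambda_2^k)+(\lambda_1^j-\lambda_2^j)\lambda_2^k]=c[\lambda_1^j\lambda_1^k-\lambda_2^j\lambda_2^k]$) shows
\[
p(T)\ =\ \begin{pmatrix} p(\lambda_1) & c(p(\lambda_1)-p(\lambda_2)) \\ 0 & p(\lambda_2) \end{pmatrix}
\]
for every polynomial $p$, and the identity persists for functions holomorphic near the joint spectrum $\{\lambda_1,\lambda_2\}$ by the Taylor calculus. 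Applied to each $\delta_l$ the off-diagonal vanishes, so $\delta_l(T)=w_l I$ and $T\in\ftdel$ for every $c$; applied to $\phi$, $\norm{\phi(T)}\ge|c||\phi(\lambda_1)-\phi(\lambda_2)|$, and letting $|c|\to\infty$ while $\norm{\phi(T)}\le M$ forces $\phi(\lambda_1)=\phi(\lambda_2)$.

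Hence $\Phi_0(w):=\phi(\lambda)$ for any $\lambda\in\delta^{-1}(w)\cap K_{t\delta}$ is a well-defined function on $\delta(K_{t\delta})\subset\frac{1}{t}\dm$; testing diagonal tuples $T=\lambda I$ shows $|\Phi_0|\le M$, and $\Phi_0$ is holomorphic wherever $\delta$ admits a local holomorphic section. Since $\kdel\subseteq K_{t\delta}$, the identity $\phi=\Phi_0\circ\delta$ already holds on $\kdel$. The remaining --- and principal --- difficulty is to extend $\Phi_0$ holomorphically from $\delta(K_{t\delta})$ to a neighborhood of $\cldm$; since, as the authors stress, $\delta(\gtdel)$ need not be an analytic variety in $\frac{1}{t}\dm$, one combines the uniform bound $|\Phi_0|\le M$ with Riemann removable-singularity and Hartogs-type extension theorems to push $\Phi_0$ across the complement (e.g., across the missing segment $\{0\}\times\frac{1}{t}(\mathbb{D}\setminus\{0\})$ in the introduction's example). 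This extension step, where the paper's operator-theoretic machinery is essential for handling arbitrary $\delta$, is the main obstacle.
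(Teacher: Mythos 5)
Your forward direction is essentially correct and matches the paper's route in spirit (the paper gets it from Lemma~\ref{lem6.2} together with Theorem~\ref{thm1.5}(b)): a function holomorphic on a neighborhood of the compact set $\cldm$ is holomorphic on $(s\mathbb{D})^m$ for some $s>1$, and the absolutely convergent power series applied to the commuting operators $\delta_l(T)$, $\|\delta_l(T)\|\le 1/t$, gives a bound uniform over $\ftdel$. (One point to be explicit about, which the paper also only handles implicitly: for $\normptdel{\phi}$ to make sense the spectra of tuples in $\ftdel$ lie in $K_{t\delta}\supsetneq\kdel$, so one must take $t$ close enough to $1$ that this set sits inside the domain of $\phi$.)

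The reverse direction has a genuine gap, and you have located it yourself. Your $2\times 2$ matrix trick correctly shows that $\Phi_0$ is well defined on $\delta(K_{t\delta})$ and bounded by $M$ there, but this is only a necessary condition; the entire content of the proposition is the production of a $\Phi$ holomorphic on a neighborhood of $\cldm$, and the tools you propose for that step cannot work in general. Riemann removable-singularity and Hartogs-type theorems extend across \emph{thin} complements, but if $m>d$ the image $\delta(K_{t\delta})$ has positive codimension in $\frac{1}{t}\dm$, so it is the image, not its complement, that is thin; and since (as the introduction's example shows) $\delta(\gtdel)$ need not be an analytic variety, Cartan's extension theorem is not available either. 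The paper sidesteps the extension problem entirely: it deduces Proposition~\ref{thm1.3} from Theorem~\ref{thm1.4}, whose second half rests on Theorem~\ref{thm1.5}(c), i.e.\ on the Ambrozie--Timotin realization formula (Theorem~\ref{thm6.2}). If $\normtdel{\phi}\le 1$ then $\phi(\lambda)=a+\langle t\delta(\lambda)(1-Dt\delta(\lambda))^{-1}\gamma,\beta\rangle$ for an isometric colligation, and the transfer function $\Psi(z)=a+\langle z(1-Dz)^{-1}\gamma,\beta\rangle$ is \emph{automatically} a globally defined element of $ball(\hinfm)$; setting $\Phi(z)=\Psi(tz)$ yields a function holomorphic on $\frac{1}{t}\dm\supset\cldm$ with $\phi=\Phi\circ\delta$. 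No extension from the image of $\delta$ is ever performed. Without this realization step (or some substitute for it) your argument does not close.
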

\subsection{Bounds for the Oka Representation}
\label{subsec1.3}

To describe our bounds for the Oka extension we shall employ a norm on holomorphic functions essentially introduced by von Neumann in \cite{vonN51}. This paper, which has had a profound influence on the development of operator theory, was the first to demonstrate that norms defined with the aid of operators can be natural from the point of view of function theory.
\begin{thm}\label{thm1.3a}
(von Neumann's Inequality \cite{vonN51}) If $C$ is a contraction acting on a complex Hilbert space and $\Phi$ is a function holomorphic on a neighborhood of $\mathbb{D}^-$, then
\be \label{eq1.2}
\norm{\Phi(C)} \le \max_{z \in \mathbb{D}^-}\abs{\Phi(z)}
\ee
\end{thm}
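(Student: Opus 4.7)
The plan is to proceed in three steps: reduce the statement to polynomials by truncating the Taylor series, realize $C$ as the compression of a unitary via the Sz.-Nagy dilation theorem, and then invoke the spectral theorem together with the maximum modulus principle on that unitary.

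Since $\Phi$ is holomorphic on a neighborhood of $\mathbb{D}^-$, it extends to a disk $\{z : |z| < R\}$ with $R > 1$, so its Taylor series $\Phi(z) = \sum_{n=0}^\infty a_n z^n$ converges uniformly on $\mathbb{D}^-$. Because $\norm{C} \le 1$, the series $\sum a_n C^n$ converges absolutely in operator norm to $\Phi(C)$. Consequently the partial sums $p_N$ satisfy $p_N \to \Phi$ uniformly on $\mathbb{D}^-$ and $p_N(C) \to \Phi(C)$ in norm, so both sides of \eqref{eq1.2} pass to the limit and it suffices to establish the inequality when $\Phi$ is a polynomial.

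Next I would invoke the Sz.-Nagy unitary dilation theorem: associated with the contraction $C$ on $\mathcal{H}$ there is a larger Hilbert space $\mathcal{K} \supseteq \mathcal{H}$ and a unitary $U \in \mathcal{L}(\mathcal{K})$ satisfying $C^n = P_\mathcal{H} U^n|_\mathcal{H}$ for every $n \ge 0$. Taking linear combinations gives $p(C) = P_\mathcal{H}\, p(U)|_\mathcal{H}$ for any polynomial $p$, and hence $\norm{p(C)} \le \norm{p(U)}$. Since $U$ is unitary, the spectral theorem yields $\norm{p(U)} = \sup_{z \in \sigma(U)} |p(z)|$, and since $\sigma(U) \subseteq \{z : |z| = 1\}$, the maximum modulus principle closes the argument: $\norm{p(U)} \le \max_{|z|=1} |p(z)| \le \max_{z \in \mathbb{D}^-} |p(z)|$.

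The substantive content of the proof is the existence of the unitary dilation, which is the main obstacle. The quickest route is Sch\"affer's explicit construction on $\mathcal{K} = \bigoplus_{n \in \mathbb{Z}} \mathcal{H}$, in which $U$ acts essentially as a bilateral shift, twisted at the central block by the defect operators $(I - C^*C)^{1/2}$ and $(I - CC^*)^{1/2}$; a direct block-matrix computation then verifies both that $U$ is unitary and that the compression formula $C^n = P_\mathcal{H} U^n|_\mathcal{H}$ holds for $n \ge 0$. With the dilation in hand, the remainder of the proof is a routine assembly of the ingredients above.
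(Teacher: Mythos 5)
The paper does not actually prove this statement: von Neumann's inequality is quoted in the introduction as a classical result with a citation to \cite{vonN51}, so there is no internal argument to compare yours against. Your proof is correct, and it is the standard dilation-theoretic proof due to Sz.-Nagy rather than von Neumann's original argument (which proceeded by factoring rational inner functions into M\"obius factors and verifying the inequality factor by factor). Each step checks out: a neighborhood of the compact set $\mathbb{D}^-$ contains a disc of radius $R>1$, so the Taylor coefficients satisfy $\sum_n |a_n| r^n<\infty$ for some $r>1$; hence $\sum_n a_n C^n$ converges in norm to $\Phi(C)$ as defined by the Riesz--Dunford calculus, and both sides of \eqref{eq1.2} pass to the limit along the partial sums, reducing to polynomials. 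The power dilation $C^n=P_{\mathcal H}U^n|_{\mathcal H}$ then gives $\|p(C)\|\le\|p(U)\|$, and the continuous functional calculus for the normal operator $U$ together with $\sigma(U)\subseteq\mathbb{T}\subseteq\mathbb{D}^-$ finishes the argument (you do not even need the maximum modulus principle at that point, only the inclusion of $\mathbb{T}$ in $\mathbb{D}^-$). The one place you defer to an ``unverified computation'' is Sch\"affer's block matrix: checking unitarity there requires the intertwining identity $C(I-C^*C)^{1/2}=(I-CC^*)^{1/2}C$, which follows by approximating the square root uniformly by polynomials in $C^*C$ and $CC^*$ respectively. That is routine and does not constitute a gap.
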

One can reformulate von Neumann's Inequality by saying 
 that if $\Phi$ is assumed holomorphic on a neighborhood of $\mathbb{D}^-$, then
\be\label{eq1.3}
\sup_{\norm{C} \le 1}\norm{\Phi(C)} = \max_{z \in \mathbb{D}^-}\abs{\Phi(z)}
\ee
Twelve years after von Neumann published his inequality, T. And\^o \cite{and63} 
proved a surprising and
subtle generalization to two variables.  If $\Phi$ is holomorphic on a
neighborhood of $({\mathbb{D}^-})^2$, then with $\mathcal{F}_2$ defined by
\be\label{eq1.4}
\mathcal{F}_2 = \set{C=(C_1,C_2)}{\norm{C_1}\le1,\norm{C_2}\le1,C_1C_2=C_2C_1},
\ee
the following analog of \eqref{eq1.3} obtains:
\be\label{eq1.5}
\sup_{C \in \mathcal{F}_2}\norm{\Phi(C)} = \max_{z \in {(\mathbb{D}^-)}^2}\abs{\Phi(z)}.
\ee
Unfortunately, when the operator theory community asked for the obvious analog of \eqref{eq1.5} to hold in dimension $3$, they were surprised to learn (\cite{var74},\cite{cradav}) that there are examples of $\Phi$ that are holomorphic on a neighborhood of ${(\mathbb{D}^-)}^3$ for which
\be\label{eq1.6}
\sup_{C \in \mathcal{F}_3}\norm{\Phi(C)} > \max_{z \in {(\mathbb{D}^-)}^3}\abs{\Phi(z)}.
\ee
As it was the \emph{right} side of \eqref{eq1.6}, defined as it is in concrete function theory terms, that was thought to be the object of interest, the \emph{left} side of \eqref{eq1.6} remained unexplored by operator theorists until the appearance in \cite{ag90} of
 the following enshrinement of von Neumann's Inequality as a definition. For $m\ge1$, let 
$$\fm=\set{C}{ C\ \text{ is an }m \text{-tuple of pairwise commuting contractions}}.$$
This is the collection defined by \eqref{eq1.0} in the case when $d=m$ and $\delta$ is the identity map on $\cm$.
\begin{defin}\label{def1.1}
For $m\ge1$ and $\Phi \in {\rm Hol}(\dm)$, define $\normm{\Phi}$ by
$$\normm{\Phi} = \sup_{\substack{C \in {\mathcal{F}}_m \\ \sigma(C) \subseteq \dm}} \norm{\Phi(C)}.$$
\end{defin}
The norm $\normm{}$ occurs in many areas of multivariable function theory and operator theory,
for example in Nevanlinna-Pick interpolation \cite{ag1,agmc_bid,baltre98}, in realization theory
\cite{bgr90,bsv05}, in the theory of matrix monotone functions \cite{amy12c}, 
Carath\'eodory-Julia theorems on the polydisk \cite{amy12a}, {\em etc.}.


We  can  now describe how to get bounds for the Oka representation. 
Notice from \eqref{eq1.0} that if $s<t$, then ${\mathcal{F}}_{t\delta} \subseteq {\mathcal{F}}_{s\delta}$. Equally obvious from \eqref{eq1.1} is that if ${\mathcal{F}}_\delta \subseteq {\mathcal{F}}_\gamma$, then ${\normpdel{\phi}} \le {\normpgam{\phi}}$. These two facts combine to show
 that $\normptdel{\phi}$ is a monotone decreasing function of $t$. Thus, we may define
$$\rho(\phi)=\lim_{t\to1^-}\normptdel{\phi}.$$
Theorem \ref{thm1.3} can be reformulated to assert that if $\phi$ is holomorphic on a neighborhood of $\kdel$, then there exists $\Phi$ holomorphic on  a neighborhood of ${(\mathbb{D}^-)}^{m}$ such that
$\phi =\Phi \circ \delta$ on $K_\delta$
if and only if $\rho(\phi) < \infty$. The following theorem describes the bounds we have for the classical Oka setting.

\begin{thm}\label{thm1.4}
Let $\delta$ be an $m$-tuple of polynomials in $d$ variables and let $\phi$ be holomorphic on a neighborhood of $\kdel$. If $\Phi$ is holomorphic on  a neighborhood of ${(\mathbb{D}^-)}^{m}$ and $\phi = \Phi\circ\delta$, then $\rho(\phi) \le \normm{\Phi}$. Furthermore, if $\epsilon > 0$, then there exists a $\Phi$ holomorphic on  a neighborhood of ${(\mathbb{D}^-)}^{m}$ such that $\phi = \Phi\circ\delta$ and $\normm{\Phi} < \rho(\phi)+\epsilon$.
\end{thm}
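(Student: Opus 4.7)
The plan is to prove the two inequalities separately. The upper bound $\rho(\phi)\le\normm{\Phi}$ is a direct composition-and-limit argument; the existence of a near-optimal $\Phi$ is the substantive content and requires a duality/lifting argument.

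For the upper bound, fix $\Phi$ holomorphic on $(r\mathbb{D})^m$ for some $r>1$. For $t\in(1/r,1)$ and $T\in\ftdel$, the tuple $\delta(T):=(\delta_1(T),\ldots,\delta_m(T))$ is commuting with $\|\delta_j(T)\|\le 1/t$ and $\sigma(\delta(T))=\delta(\sigma(T))\subseteq(1/t)(\mathbb{D}^-)^m$. Setting $\Phi_t(z):=\Phi(z/t)$, which is holomorphic on a neighborhood of $(\mathbb{D}^-)^m$ since $t>1/r$, the composition property of the Taylor functional calculus gives
\[
\phi(T) \ = \ (\Phi\circ\delta)(T) \ = \ \Phi(\delta(T)) \ = \ \Phi_t(t\delta(T)),
\]
and $t\delta(T)\in\fm$. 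Hence $\|\phi(T)\|\le\normm{\Phi_t}$, using that $\normm{\cdot}$ controls values at every commuting tuple of contractions via the approximation $C\mapsto sC$, $s\to 1^-$ (since $\Phi_t$ is holomorphic on a neighborhood of the closed polydisk). Therefore $\normptdel{\phi}\le\normm{\Phi_t}$. A direct computation shows that $\normm{\Phi_t}=\sup\{\|\Phi(C')\|:C'\text{ commuting}, \|C'_j\|\le 1/t\}$ decreases monotonically to $\normm{\Phi}$ as $t\to 1^-$, yielding $\rho(\phi)\le\normm{\Phi}$.

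For the existence statement, given $\epsilon>0$, the goal is to exhibit $\Phi$ holomorphic on a neighborhood of $(\mathbb{D}^-)^m$ with $\Phi\circ\delta=\phi$ on $\kdel$ and $\normm{\Phi}\le\rho(\phi)+\epsilon$. Pick $t<1$ with $\normptdel{\phi}<\rho(\phi)+\epsilon/2$. My plan is to construct a $t$-dependent lift $\Phi_t$ and then pass to the limit. The key step is to recognise $\normm{\cdot}$ as the norm of the universal operator algebra of $m$ commuting contractions, so that $\normm{\cdot}$ admits a dual description via states on this universal algebra (by a GNS / Agler-type realization). The restriction map $R:\Phi\mapsto\Phi\circ\delta$ is induced by sending the $m$ canonical generators to $\delta_1,\ldots,\delta_m$, and is contractive by part one. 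The substance of the existence claim is that $R$ is essentially a quotient map onto the set of $\phi$ with $\rho(\phi)<\infty$. To produce $\Phi_t$ concretely, start from a near-extremal tuple $T\in\ftdel$ witnessing $\normptdel{\phi}$, use $t\delta(T)\in\fm$ to pass to a state on the image subalgebra of the universal algebra, pull back and extend by Hahn--Banach to a norm-preserving state on the full universal algebra, and decode the extended state as $\Phi_t$. A Montel / normal-family argument then extracts a limit $\Phi$ on a common neighborhood of $(\mathbb{D}^-)^m$.

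The main obstacle is the norm-controlled lifting. One must organise a dual description of $\normm{\cdot}$ that is compatible with the quotient by the ideal $\{P:P\circ\delta\equiv 0\text{ on }\kdel\}$, which is non-trivial because $\normm{\cdot}$ is operator-theoretic rather than a sup norm, and one must ensure that the Hahn--Banach extension actually decodes as a \emph{holomorphic} function rather than an abstract algebra element. A secondary technicality is the passage from the family $\{\Phi_t\}$ to a single holomorphic $\Phi$ on a common neighborhood of $(\mathbb{D}^-)^m$; the natural tool is Montel's theorem, applied after observing that $\normm{\cdot}$-bounded families are locally uniformly bounded on $\mathbb{D}^m$.
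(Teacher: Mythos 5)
Your proof of the inequality $\rho(\phi)\le\normm{\Phi}$ is correct and is essentially the paper's argument: rescale to $\Psi(z)=\Phi(z/t)$, use the composition property of the functional calculus to get $\phi(T)=\Psi(t\delta(T))$ with $t\delta(T)\in\fm$, and let $t\to1^-$, the continuity of $t\mapsto\normm{\Phi(\cdot/t)}$ being supplied by the estimate $\normm{\Psi}\le c\sup_{\dm}|\Psi|$ for functions holomorphic past the closed polydisk (Lemma~\ref{lem6.2}).

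The second half, however, has a genuine gap, and you have located it yourself: the ``norm-controlled lifting'' is the entire content of the statement, and your sketch does not close it. A Hahn--Banach extension of a state on the universal operator algebra of $m$ commuting contractions produces an abstract functional, not a holomorphic function on $\dm$ of controlled $\normm{\cdot}$-norm, and the assertion that $\Phi\mapsto\Phi\circ\delta$ is ``essentially a quotient map'' is precisely what is to be proved, so it cannot be invoked. The missing ingredient is the realization formula. The paper fixes a single $t<1$ with $\normtdel{\phi}<\rho(\phi)+\epsilon$ and applies the Ambrozie--Timotin theorem (Theorem~\ref{thm6.2}) to the tuple $t\delta$: after normalizing, $\phi$ admits a transfer-function realization $\phi(\lambda)=a+\langle t\delta(\lambda)(1-Dt\delta(\lambda))^{-1}\gamma,\beta\rangle$ with $V$ an isometry. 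Replacing the argument $t\delta(\lambda)$ by the free variable $z$ yields, by Theorem~\ref{thm6.1}, a function $\Psi$ with $\normm{\Psi}\le\normtdel{\phi}$ and $\Psi\circ(t\delta)=\phi$; then $\Phi(z)=\Psi(tz)$ is holomorphic on $\tfrac1t\dm$, hence on a neighborhood of $\cldm$, and satisfies $\Phi\circ\delta=\phi$ and $\normm{\Phi}\le\normm{\Psi}<\rho(\phi)+\epsilon$. Note that this makes your final Montel step both unnecessary and harmful: a single $t$ suffices, whereas a normal-families limit over $t\to1^-$ would only give holomorphy on the open polydisk (the domains $\tfrac1t\dm$ shrink to $\dm$), and in general no lift holomorphic on a fixed neighborhood of $\cldm$ achieves the norm $\rho(\phi)$ exactly --- that is why the theorem carries the $\epsilon$.
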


Note that Propositions~\ref{thm1.3} and \ref{thm1.2} are immediate corollaries of Theorem~\ref{thm1.4},
once one knows that every function $\Phi$  holomorphic on  a neighborhood of ${(\mathbb{D}^-)}^{m}$ has $\normm{\Phi}$ finite (Lemma~\ref{lem6.1}).

%
\subsection{$\hinfdel$ and $\hinfm$}\label{subsec1.4}

Up to now we have restricted ourselves to the classical Oka setting, in which one seeks to represent
functions  $\phi$ that are holomorphic on a neighborhood of $\kdel$. 
Sharper theorems are obtainable for functions defined only on $\gdel$. However, if $\phi$ is only defined on $\gdel$, then \eqref{eq1.1} doesn't make sense as it needn't be the case that $\phi(T)$ is well defined for all $T \in \fdel$. To accommodate this difficulty we modify the definition \eqref{eq1.1}  to sup only over those $T \in \fdel$ such that $\sigma(T) \subseteq \gdel$. Thus, for $\phi$ a holomorphic function on $\gdel$, we define $\normdel{\phi}$ by the formula,
\be\label{eq1.7}
{\norm{\phi}}_\delta = \sup_{\substack{T \in {\mathcal{F}}_\delta \\ \sigma(T) \subseteq G_\delta}} \norm{\phi(T)}.
\ee
Tautologically, we have 
$\normdel{\phi} \leq \normpdel{\phi} \leq \normtdel{\phi}$ in the case when $\phi$ is holomorphic on a neighborhood of $\kdel$ and $t < 1$ is sufficiently close to $1$.
 
Armed with this definition, we can define the space $\hinfdel$ to consist of all functions $\phi$ that are holomorphic on $\gdel$ and such that
$\normdel{\phi}$ is finite. Let us use $\bf m$ to denote the identity polynomial on ${\mathbb C}^m$. Then 
the norm $\normm{\Phi}$ from Definition \ref{def1.1}
is the same as the norm 
$\normbm{\Phi}$, and we 
 can define $\hinfm$ to consist of all functions $\Phi$ that are holomorphic on $\dm$ and such that
$\normm{\Phi}$  is finite.

 It turns out that $\hinfdel$ and $\hinfm$ equipped with these norms are Banach spaces. That these spaces are natural spaces in which to  study Oka representations is made clear by the following theorem.
\begin{thm}\label{thm1.5}
Let $\delta$ be an $m$-tuple of polynomials in $d$ variables and assume that $\phi$ is a holomorphic function on $\gdel$.
\begin{align*}
&a)\text{ There exists } \Phi \in \hinfm\text{ such that } \phi = \Phi \circ \delta \text{ if and only if } \phi \in \hinfdel.\\
&b)\text{ If } \Phi \in \hinfm \text{ and } \phi = \Phi \circ \delta, \text{ then } \normdel{\phi} \le \normm{\Phi}.\\
&c)\text{ If } \phi \in \hinfdel, \text{ there exists a } \Phi \in \hinfm \text{ such that } \phi = \Phi \circ \delta \text{ and } \normdel{\phi} = \normm{\Phi}.
\end{align*}
\end{thm}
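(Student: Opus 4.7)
Part (b) is the easy direction and (a) is formal, so the work is in (c). For (b): if $T \in \fdel$ with $\sigma(T) \subseteq \gdel$, then the tuple $\delta(T)$ is a commuting $m$-tuple of contractions, its Taylor spectrum equals $\delta(\sigma(T)) \subseteq \delta(\gdel) \subseteq \dm$ by the spectral mapping theorem, and the composition rule $(\Phi \circ \delta)(T) = \Phi(\delta(T))$ yields $\|\phi(T)\| \le \normm{\Phi}$; taking the supremum over such $T$ proves (b). Part (a) then follows because $\Rightarrow$ is immediate from (b) and $\Leftarrow$ is the content of (c).

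For (c), the plan splits into a compatibility step and a construction step. The compatibility step shows that whenever $\lambda, \mu \in \gdel$ satisfy $\delta(\lambda) = \delta(\mu)$, necessarily $\phi(\lambda) = \phi(\mu)$. To prove it, I would test $\phi$ against the $d$-tuple of upper-triangular $2 \times 2$ matrices
\begin{equation*}
T_j \ = \ \begin{pmatrix} \lambda_j & t(\lambda_j - \mu_j) \\ 0 & \mu_j \end{pmatrix}, \qquad 1 \le j \le d,\ t \in \mathbb{C}.
\end{equation*}
A short calculation shows these commute, and an induction on degree of monomials gives
\begin{equation*}
p(T) \ = \ \begin{pmatrix} p(\lambda) & t\bigl(p(\lambda) - p(\mu)\bigr) \\ 0 & p(\mu) \end{pmatrix}
\end{equation*}
for every polynomial $p$, hence by Taylor calculus for every $p$ holomorphic on a neighbourhood of $\{\lambda,\mu\}$. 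The hypothesis $\delta(\lambda) = \delta(\mu)$ gives $\delta_l(T) = \delta_l(\lambda) I$ for each $l$, so $T \in \fdel$ with $\sigma(T) = \{\lambda,\mu\} \subseteq \gdel$; but the $(1,2)$-entry of $\phi(T)$ is $t(\phi(\lambda) - \phi(\mu))$, which is bounded as $t$ ranges over $\mathbb{C}$ only if $\phi(\lambda) = \phi(\mu)$. Thus $\phi$ descends to a well-defined function on $\delta(\gdel) \subseteq \dm$.

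The construction step is the heart of the proof. I plan to proceed by a Hahn-Banach / weak-compactness argument: the unit ball of $\hinfm$ is compact in the natural weak topology coming from its operator-theoretic definition (the one generated by the seminorms $\Psi \mapsto |\langle \Psi(S)x,y\rangle|$ as $S \in \fm$ with $\sigma(S) \subseteq \dm$ and $x, y$ range over unit vectors), the restriction map $\Psi \mapsto \Psi \circ \delta$ is continuous in that topology, and so the set $\{\Psi \circ \delta : \normm{\Psi} \le \normdel{\phi}\}$ is closed and convex in $\hinfdel$. If $\phi$ were not in this set, a separating linear functional $\Lambda$ would satisfy $\mathrm{Re}\,\Lambda(\phi) > \sup\{\mathrm{Re}\,\Lambda(\Psi \circ \delta) : \normm{\Psi} \le \normdel{\phi}\}$. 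The main obstacle is then to convert this abstract $\Lambda$ into concrete operator-theoretic data---an explicit $T \in \fdel$ with $\sigma(T) \subseteq \gdel$ together with vectors $x, y$ for which $|\langle \phi(T)x,y\rangle| > \normdel{\phi}$---using a GNS-type Hilbert space construction on the data carried by $\Lambda$. The delicate point, and what distinguishes $\normdel{}$ from $\normpdel{}$, is enforcing the strict spectral containment $\sigma(T) \subseteq \gdel$ rather than merely $\sigma(T) \subseteq \kdel$; this is likely to require a spectrum-trimming step or a limiting argument inside the construction, and it is the step that makes the norm equality in (c) sharp.
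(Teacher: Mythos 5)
Your part (b) is correct and is actually a more direct route than the paper's: you invoke the spectral mapping theorem and the composition property of the Taylor functional calculus to get $\phi(T)=\Phi(\delta(T))$ with $\delta(T)\in\fm$ and $\sigma(\delta(T))\subseteq\dm$, whereas the paper derives (b) from the realization formula. Your compatibility step for (c), via the commuting $2\times2$ upper-triangular tuples, is also correct and is a nice observation.

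However, there is a genuine gap in (c), and it is exactly where you say "the main obstacle is then to convert this abstract $\Lambda$ into concrete operator-theoretic data\dots using a GNS-type Hilbert space construction." That conversion \emph{is} the theorem; you have described a program (cone separation plus GNS) rather than executed it, and the step you flag as delicate --- enforcing $\sigma(T)\subseteq\gdel$ rather than $\sigma(T)\subseteq\kdel$ in the operators produced by the separation argument --- is left unresolved. There is also a second gap your outline does not address: your compatibility step only produces a function on $\delta(\gdel)$, which need not be an analytic variety or even a reasonable subset of $\dm$ (the paper's example $\delta_1=\lambda^1$, $\delta_2=\lambda^1\lambda^2$ shows $\delta(\gdel)$ can be a polydisc minus a punctured slice), so you still owe an extension of the induced function to all of $\dm$ with control of $\normm{\cdot}$. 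The paper's proof disposes of both issues at once with the realization formula: by the Ambrozie--Timotin theorem (Theorem~\ref{thm6.2}), $\phi\in ball(\hinfdel)$ admits a realization $\phi(\lambda)=a+\langle\delta(\lambda)(1-D\delta(\lambda))^{-1}\gamma,\beta\rangle$ in which $\lambda$ enters only through $\delta(\lambda)$; substituting the free variable $z\in\dm$ for $\delta(\lambda)$ defines $\Phi$ on all of $\dm$ with $\Phi\in ball(\hinfm)$ by Theorem~\ref{thm6.1}, and $\phi=\Phi\circ\delta$. This makes your compatibility lemma unnecessary and yields the norm equality in (c) immediately. If you intend to avoid citing the realization theorem, your separation/GNS plan essentially amounts to reproving it, and that argument must be written out in full before (c) --- and hence the $\Leftarrow$ direction of (a) --- can be considered established.
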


\subsection{Realization formula}
\label{subsec1.5}

Our proofs rely on the existence of realizations.

\begin{defin}\label{def1.19}
Let $\phi$ be a function on $\gdel$. We say a $4$-tuple $(a,\beta,\gamma,D)$ is a realization for $\phi$ if $a \in \mathbb{C}$ and there exists a decomposed Hilbert space, $\m = \oplus_{l=1}^m \m_l$, such that the $2\times2$ matrix,
$$V=\begin{bmatrix}a&1\otimes\beta\\\gamma\otimes1&D\end{bmatrix},$$
acts isometrically on $\mathbb{C}\oplus\m$, $\delta(\lambda)$ acts on $\m$ via the formula,
$$\delta(\lambda)(\oplus_{l=1}^m x_l)=\oplus_{l=1}^m \delta_l(\lambda)x_l,$$
and
$$\phi(\lambda)=a+\langle \delta(\lambda){(1-D\delta(\lambda))}^{-1}\gamma,\beta \rangle
$$
for all $\lambda \in \gdel$.
\end{defin}

 C.-G.~Ambrozie and D.~Timotin \cite{at03} proved that a function
$\phi$ on $\gdel$ has a realization if and only if $\normmdel{\phi} \leq 1$,
where
\[
\normmdel{\phi} \ = \
\sup \{ \| \phi (T) \| \ : \
 \| \delta_l (T) \| < 1, \ 1 \leq l \leq m \} .
\]
The purpose of Section~\ref{sec3} is to develop the machinery to show
that
\[
\normmdel{\phi} \ = \
\normdel{\phi}, \qquad \forall \ \phi {\rm \ holomorphic\ on\ } \gdel 
\]
(Theorem~\ref{thm6.2}).
In fact, both norms agree with 
$\sup \{ \| \phi(T) \| \}$ as $T$ ranges over commuting $d$-tuples of
diagonalizable matrices in $\fdel$ (Theorem~\ref{thm8.1}).



\section {$H_\delta^\infty$}
Let $\delta =(\delta_1, \delta_2, ... ,\delta_m)$ be an $m$-tuple of nonconstant polynomials with complex coefficients in $d$ variables. We can think of $\delta$ as a map from $\mathbb{C}^d$ into $\mathbb{C}^m$ and define two sets in $\mathbb{C}^d$ by $G_{\delta}={\delta}^{-1}(\mathbb{D}^m)$ and $K_{\delta}={\delta}^{-1}((\mathbb{D}^-)^m)$.
\begin{lem}\label{lem2.1}
$G_{\delta}=K_{\delta}^\circ$
\end{lem}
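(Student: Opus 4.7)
The plan is to prove the two inclusions $G_\delta \subseteq K_\delta^\circ$ and $K_\delta^\circ \subseteq G_\delta$ separately. The first is essentially immediate, and the second is where the content lies; it will come down to a maximum modulus argument, using the standing assumption that each $\delta_l$ is a nonconstant polynomial.

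For $G_\delta \subseteq K_\delta^\circ$, I would just observe that $G_\delta = \delta^{-1}(\mathbb{D}^m)$ is open (as the preimage of the open polydisk under the continuous map $\delta$) and that $G_\delta \subseteq K_\delta$; taking interiors gives the inclusion.

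For $K_\delta^\circ \subseteq G_\delta$, I would argue by contradiction. Suppose $\lambda_0 \in K_\delta^\circ$ but $\lambda_0 \notin G_\delta$; then $|\delta_l(\lambda_0)| = 1$ for some $l$. Choose an open ball $B$ centered at $\lambda_0$ with $B \subseteq K_\delta$, so that $|\delta_l(\mu)| \le 1$ for every $\mu \in B$. Since $\delta_l$ is nonconstant, there exists a vector $v \in \mathbb{C}^d$ such that the restricted polynomial $z \mapsto \delta_l(\lambda_0 + zv)$ is nonconstant in the single variable $z$; indeed, if $\delta_l(\lambda_0 + zv)$ were constant in $z$ for every $v$, all directional derivatives of $\delta_l$ would vanish at $\lambda_0$ and $\delta_l$ itself would be constant, contradicting the hypothesis. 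Rescaling $v$, we may assume $\lambda_0 + zv \in B$ for $|z| < 1$. Then $f(z) := \delta_l(\lambda_0 + zv)$ is a nonconstant holomorphic (polynomial) function on the unit disk with $|f(0)| = 1$ and $|f(z)| \le 1$ for $|z| < 1$. The maximum modulus principle forces $f$ to be constant, the desired contradiction.

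I do not expect any genuine obstacle here; the only minor subtlety is the need to pass from a nonconstant polynomial in $d$ variables to a nonconstant polynomial in a single variable along a suitable slice before one can invoke the one-variable maximum modulus principle, and the direction-finding argument above handles that cleanly.
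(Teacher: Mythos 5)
Your proof is correct and rests on the same key fact the paper uses: a nonconstant polynomial cannot attain a local maximum of modulus, so a point with $|\delta_l|=1$ cannot be interior to $K_\delta$. The paper phrases this by producing a sequence $\lambda_n\to\lambda$ with $|\delta_l(\lambda_n)|>1$ (so $\lambda\in\partial K_\delta$), while you make the underlying maximum-modulus argument explicit via a one-dimensional slice, but the approach is essentially identical.
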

\begin{proof}
Since $G_{\delta} \subseteq K_{\delta}$ and $G_{\delta}$ is open, $G_{\delta} \subseteq K_{\delta}^\circ$. If $\lambda \in K_{\delta}^\circ \setminus G_{\delta}$, then there exists an index $l$ with $\abs{\delta_l(\lambda)} = 1$. Since $\delta_l$ is assumed nonconstant, there exists a sequence $\lambda_n \to \lambda$ such that $\abs{\delta_l(\lambda_n)} > 1$. In particular,  $\lambda_n \notin K_\delta$, so, $\lambda \in \partial K_\delta$, a contradiction. This shows that $K_{\delta}^\circ \setminus G_{\delta}$ is empty. Hence, $G_{\delta}=K_{\delta}^\circ$.
\end{proof}
Note that even when $d=1$,  it need not be the case that $G_{\delta}^-$ coincides with $K_\delta$;
 and when $d>2$, $K_\delta$ need not be compact and $\gdel$ need not be bounded. 
In what follows $T$ will always denote a $d$-tuple of pairwise commuting bounded operators acting on a Hilbert space. Let $\fdel$ be as in (\ref{eq1.0}).
\begin{lem}\label{lem2.3}
If $T \in {\mathcal{F}}_\delta$ then $\sigma(T) \subseteq K_\delta$.
\end{lem}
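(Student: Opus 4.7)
The plan is to reduce the statement to the Taylor spectral mapping theorem applied to each component polynomial $\delta_l$. Recall that for a commuting $d$-tuple $T$, the Taylor spectrum $\sigma(T)$ is a compact subset of $\mathbb{C}^d$, and for any polynomial $p$ in $d$ variables the spectral mapping relation $\sigma(p(T)) = p(\sigma(T))$ holds. This is the key input; everything else is bookkeeping.

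Concretely, I would argue by contrapositive. Suppose $\lambda \in \sigma(T)$; I want to show $\lambda \in K_\delta$, i.e., $|\delta_l(\lambda)| \leq 1$ for each $l = 1, \ldots, m$. Fix $l$. By the spectral mapping theorem applied to the polynomial $\delta_l$, one has $\delta_l(\lambda) \in \delta_l(\sigma(T)) = \sigma(\delta_l(T))$. But $\delta_l(T)$ is a single bounded operator, and by hypothesis $\|\delta_l(T)\| \leq 1$, so its ordinary spectrum is contained in the closed unit disk $\mathbb{D}^-$. Therefore $|\delta_l(\lambda)| \leq 1$. Since $l$ was arbitrary, $\lambda \in K_\delta$, and we conclude $\sigma(T) \subseteq K_\delta$.

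The main (and only) obstacle is invoking the right form of the spectral mapping theorem. For the Taylor spectrum this is a nontrivial result due to Taylor, but it is by now standard and is precisely one of the core tools cited in the introduction (via the references \cite{tay70a}, \cite{tay70b}, \cite{eschput}). For polynomial mappings one does not need the full holomorphic functional calculus: the algebraic properties of the Taylor spectrum (it is compact, nonempty, and the projection onto each coordinate of $\sigma(T^1, \ldots, T^d)$ agrees with the usual spectrum of that coordinate operator, together with polynomial closure) give the required identity. In a self-contained treatment one could instead invoke the characterization of $\sigma(T)$ as the set of joint approximate eigenvalues modulo the Koszul complex, but for this lemma the brief citation of polynomial spectral mapping is sufficient.
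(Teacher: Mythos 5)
Your argument is correct and is essentially identical to the paper's proof: both fix $l$, use $\|\delta_l(T)\|\le 1$ to place $\sigma(\delta_l(T))$ in $\mathbb{D}^-$, and invoke the polynomial spectral mapping theorem for the Taylor spectrum to conclude $\delta_l(\sigma(T))\subseteq\mathbb{D}^-$, hence $\sigma(T)\subseteq\delta^{-1}((\mathbb{D}^-)^m)=K_\delta$. (Your phrasing ``by contrapositive'' is a slight misnomer --- you in fact argue directly, pointwise --- but this does not affect the proof.)
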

\begin{proof}
Let $T\in{\mathcal{F}}_\delta$. Fix $l$. Since $\norm{{\delta}_l(T)} \le 1$, ${\sigma(\delta}_l(T)) \subseteq \mathbb{D}^-$. Hence, by the spectral mapping theorem, $\delta_l(\sigma(T)) \subseteq {\mathbb{D}}^-$. Thus, $\sigma(T) \subseteq {\delta}^{-1}((\mathbb{D}^-)^m) = K_{\delta}$.
\end{proof}

We now define an algebra of holomorphic functions on $G_\delta$. For $f \in {\rm Hol}(G_\delta)$ and $T$ with spectrum in $G_\delta$, $f(T)$ can be defined by the functional calculus. Let
\be\label{eq2.1}
{\norm{f}}_\delta = \sup_{\substack{T \in {\mathcal{F}}_\delta \\ \sigma(T) \subseteq G_\delta}} \norm{f(T)},
\ee
and define $H_\delta^\infty$ to consist of all $f \in {\rm Hol}(G_\delta)$ such that $\normdel{f}$ is finite.
\begin{prop}\label{prop2.1}
$H_\delta^\infty$ equipped with $\normdel{f}$ is a Banach algebra. Furthermore, if $H^\infty(G_\delta)$ denotes the space of bounded holomorphic functions on $G_\delta$ equipped with the sup norm, ${\norm{f}}_\infty$, then $H_\delta^\infty \subseteq H^\infty(G_\delta)$ and ${\norm{f}}_\infty \le {\norm{f}}_\delta$ for all $f \in H_\delta^\infty$.
\end{prop}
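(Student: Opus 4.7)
My plan is to dispose of the second sentence first, since it is needed to deduce both the definiteness of $\normdel{\cdot}$ and the Banach-space completeness. For each $\lambda\in G_\delta$, let $T_\lambda$ be the commuting $d$-tuple $(\lambda^1 I,\ldots,\lambda^d I)$ acting on $\mathbb{C}$. Then $\delta_l(T_\lambda)=\delta_l(\lambda)I$ has norm $|\delta_l(\lambda)|<1$, so $T_\lambda\in\fdel$, and $\sigma(T_\lambda)=\{\lambda\}\subset G_\delta$. The functional calculus gives $f(T_\lambda)=f(\lambda)I$, so $|f(\lambda)|=\norm{f(T_\lambda)}\le\normdel{f}$. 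Taking the sup over $\lambda$ shows $f\in\hinfofg$ with $\norm{f}_\infty\le\normdel{f}$.

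For the algebra structure, the scalar-homogeneity, triangle inequality, and submultiplicativity of $\normdel{\cdot}$ are inherited termwise from the operator norm: for any admissible $T$, $(f+g)(T)=f(T)+g(T)$ and $(fg)(T)=f(T)g(T)$ (the latter uses that all the operators $f(T),g(T)$ commute, which is a standard property of the Taylor calculus applied to the commuting tuple $T$). Definiteness is immediate from $\norm{f}_\infty\le\normdel{f}$ just proved.

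It remains to prove completeness. Let $\{f_n\}$ be Cauchy in $\normdel{\cdot}$. By the inequality $\norm{f_n-f_m}_\infty\le\normdel{f_n-f_m}$, the sequence is uniformly Cauchy on $\gdel$, hence converges uniformly on $\gdel$ to some bounded holomorphic $f\in\hinfofg$. I claim $f\in\hinfdel$ and $\normdel{f_n-f}\to 0$. Fix $T\in\fdel$ with $\sigma(T)\subseteq\gdel$. Since $\sigma(T)$ is a compact subset of the open set $\gdel$, the uniform convergence $f_n\to f$ on $\gdel$ yields uniform convergence on a neighborhood of $\sigma(T)$; the continuity of the Taylor functional calculus with respect to uniform convergence on neighborhoods of the joint spectrum then gives $\norm{f_n(T)-f(T)}\to 0$. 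Passing to the limit $m\to\infty$ in the inequality $\norm{f_n(T)-f_m(T)}\le\normdel{f_n-f_m}$ yields $\norm{f_n(T)-f(T)}\le\limsup_m\normdel{f_n-f_m}$, and taking the supremum over all admissible $T$ gives $\normdel{f_n-f}\le\limsup_m\normdel{f_n-f_m}$, which tends to $0$ as $n\to\infty$ by the Cauchy property. In particular $f\in\hinfdel$.

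The one step that could be delicate is the continuity of the Taylor functional calculus that I invoke in the completeness argument; this is a standard fact (functions uniformly close on a neighborhood of the Taylor spectrum of a commuting tuple produce operators close in norm), so if one prefers a more self-contained route one can instead choose a fixed compact $K$ with $\sigma(T)\subseteq K^\circ\subseteq K\subseteq\gdel$ and use a Cauchy-type integral representation for the functional calculus on $K$, converting the uniform convergence on $K$ directly into operator-norm convergence.
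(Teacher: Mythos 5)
Your proposal is correct and follows essentially the same route as the paper: the scalar tuples $T_\lambda$ give the inequality $\norm{f}_\infty\le\normdel{f}$, and completeness is obtained by passing from a $\normdel{\cdot}$-Cauchy sequence to a uniform limit and invoking the continuity of the functional calculus, exactly as in the paper's argument (your $\limsup$ phrasing is just a tidier version of the paper's $\epsilon$ bookkeeping).
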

\begin{proof}
That $\hinfdel$ is a normed algebra is immediate from \eqref{eq2.1}. If $\lambda \in \gdel$, then $\lambda$ can be viewed as an element of $\fdel$ and, in addition, $\sigma(\lambda)=\{\lambda\} \subseteq \gdel$. Hence,
\be\label{eq2.2}
\normdel{f}=\sup_{\substack{T \in {\mathcal{F}}_\delta \\ \sigma(T) \subseteq G_\delta}} \norm{f(T)}
\ge \sup_{\lambda \in \gdel} \abs{f(\lambda)} = {\norm{f}}_\infty.
\ee
In particular, this implies $\hinfdel \subseteq \hinfofgd$.

Now assume that $\{f_n\}$ is a Cauchy sequence in $\hinfdel$. Since \eqref{eq2.2} implies that  $\{f_n\}$ is also Cauchy in $\hinfofgd$, there exists $f \in \hinfofgd$ such that $f_n \to f$ in $\hinfofgd$. Hence, by the continuity of the functional calculus,
\be\label{eq2.3}
\text{If } \sigma(T) \subseteq G_\delta, \text{ then } f_n(T) \to f(T).
\ee
Now, since $\{f_n\}$ is Cauchy in $\hinfdel$, there exists $M$ such that $\norm{f_n} \le M$ for all $N$. Hence, if $T \in \fdel$ and $\sigma(T) \subseteq G$, then \eqref{eq2.3} implies that
$$\norm{f(T)} = \lim_{n \to \infty} \norm{f_n(T)} \le M.$$
Hence, since $\norm{f(T)} \le M$ whenever $T \in \fdel$ and $\sigma(T) \subseteq G_\delta$, \eqref{eq2.1} implies that $\normdel{f} \le M$, and we see that $f \in \hinfdel$.

To see that $f_n \to f$ in $\hinfdel$, fix $\epsilon > 0$. Choose $N$ so that $m,n \ge N \Rightarrow \normdel{f_n-f_m} < \epsilon$. If $T \in \fdel$ and $\sigma(T) \subseteq G_\delta$, then
$$\norm{f_n(T)-f_m(T)} < \epsilon.$$
Thus, letting $m\to\infty$, we see that if $T \in \fdel$ and $\sigma(T) \subseteq G_\delta$, then
$$n \ge N \Rightarrow \norm{f_n(T)-f(T)} \le \epsilon.$$
But then it follows from \eqref{eq2.1} that
$$n \ge N \Rightarrow \normdel{f_n-f} \le \epsilon.$$
Thus, $f_n \to f$ in $\hinfdel$.
\end{proof}

We close this section with the following proposition which identifies in operator theory terms when the space $\hinfdel$ contains the functions that are holomorphic on a neighborhood of $\kdel$. Let us define
\[
\fdel^0 \ = \
\{ T \in \fdel : \sigma(T) \subset \gdel \}.
\]
\begin{prop}\label{prop2.2}
The following are equivalent.
\begin{align*}
&\text{a)} \quad \phi \in \hinfdel \text{ whenever } \phi  \text{ is holomorphic on a neighborhood of } \kdel.\\
&\text{b)} \quad \lambda^r \in \hinfdel \text{ for } r=1,\ldots,d.\\
&\text{c)} \quad \fdel^0 \text{ is bounded}.
\end{align*}
\end{prop}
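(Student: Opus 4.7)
The plan is to prove the cycle (a) $\Rightarrow$ (b) $\Rightarrow$ (c) $\Rightarrow$ (a). The implication (a) $\Rightarrow$ (b) is immediate because each coordinate function $\lambda^r$ is entire, hence holomorphic on a neighborhood of $\kdel$. For (b) $\Rightarrow$ (c), I would fix $T \in \fdel^0$ and apply the Taylor functional calculus to $f(\lambda) = \lambda^r$; this yields $f(T) = T^r$ and so $\norm{T^r} \le \normdel{\lambda^r}$, which gives a uniform bound on each coordinate of every $T \in \fdel^0$.

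The substantive direction is (c) $\Rightarrow$ (a). Suppose $\fdel^0$ is bounded, with $\norm{T^r} \le R$ for all $T \in \fdel^0$ and all $r$, and let $\phi$ be holomorphic on a neighborhood of $\kdel$. The plan is to reduce to the classical Oka setting of Theorem~\ref{thm1.0} by augmenting $\delta$ so that $\kdel$ becomes effectively compact and then rescaling. Pick $R'' > R' > R$ and form the $(m+d)$-tuple $\eta := (\delta_1, \ldots, \delta_m, \lambda^1/R', \ldots, \lambda^d/R')$ of polynomials in $d$ variables. The set $K_\eta = \kdel \cap \{\lambda \in \cd : |\lambda^r| \le R' \text{ for all } r\}$ is now compact, and the bound $\norm{T^r/R'} \le R/R' < 1$ together with the spectral containment $\sigma(T) \subseteq \gdel$ shows $\fdel^0 \subseteq \mathcal{F}_\eta^0$. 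The change of variables $\mu = \lambda/R''$ produces $\tilde\eta(\mu) := \eta(R''\mu)$ with $K_{\tilde\eta} = K_\eta/R'' \subseteq \dd$, so Oka's theorem applied to $\tilde\eta$ and $\tilde\phi(\mu) := \phi(R''\mu)$ yields $\tilde\Phi$ holomorphic on a neighborhood of $(\mathbb{D}^-)^{2d+m}$ with
\[
\phi(\lambda) \ = \ \tilde\Phi(\lambda/R'',\, \delta(\lambda),\, \lambda/R') \quad \text{on } K_\eta.
\]
For $T \in \fdel^0$, the superposition property of the Taylor functional calculus gives $\phi(T) = \tilde\Phi(S)$, where $S := (T/R'',\, \delta(T),\, T/R')$ is a commuting $(2d+m)$-tuple with each component of norm at most $1$. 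Since $\tilde\Phi$ extends holomorphically to $\rho\mathbb{D}^{2d+m}$ for some $\rho > 1$, Cauchy's inequalities give geometric decay of its Taylor coefficients $c_\alpha$ at the origin, so $C := \sum_\alpha |c_\alpha| < \infty$ and
\[
\norm{\phi(T)} \ \le \ \sum_\alpha |c_\alpha|\, \norm{S^\alpha} \ \le \ C,
\]
a bound independent of $T \in \fdel^0$. Hence $\phi \in \hinfdel$.

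The main obstacle lies in orchestrating the reduction in (c) $\Rightarrow$ (a): one has to enlarge $\delta$ so as to encode the boundedness of $\fdel^0$ while keeping $\fdel^0$ inside $\mathcal{F}_\eta^0$, rescale so that Oka's normalization $K_{\tilde\eta} \subseteq \dd$ holds, and then transfer the resulting factorization of $\phi$ faithfully through the Taylor functional calculus. Once the factorization $\phi(\lambda) = \tilde\Phi(\lambda/R'', \delta(\lambda), \lambda/R')$ is in hand, the uniform bound on $\norm{\phi(T)}$ is a routine consequence of Cauchy's estimates on a polydisc of radius slightly larger than $1$.
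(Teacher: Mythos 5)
Your cycle is valid, and (a)\,$\Rightarrow$\,(b) and (b)\,$\Rightarrow$\,(c) coincide with the paper's. For the substantive implication (c)\,$\Rightarrow$\,(a), however, you take a genuinely different and much heavier route. The paper argues by contraposition with a direct-sum trick: if $\phi\notin\hinfdel$, choose $T_n\in\fdel^0$ with $\norm{\phi(T_n)}\to\infty$; boundedness of $\fdel^0$ makes $T=\oplus_n T_n$ a bounded commuting tuple lying in $\fdel$, Lemma~\ref{lem2.3} gives $\sigma(T)\subseteq\kdel$ so that $\phi(T)$ is defined, and $\norm{\phi(T_n)}\le\norm{\phi(T)}<\infty$ is the contradiction --- two lines, purely operator-theoretic, no representation of $\phi$ needed. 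You instead manufacture a global representation of $\phi$ by augmenting $\delta$ with scaled coordinates, invoking the classical Oka theorem, and then bounding $\norm{\phi(T)}$ uniformly via Cauchy estimates on the Taylor coefficients of $\tilde\Phi$; the details (compactness of $K_\eta$, the spectral containments needed for the superposition property, the norm-convergent power series) all check out, and your final step is precisely the ``direct, constructive proof based on the theory of power series'' that the paper mentions as an alternative route to Lemma~\ref{lem6.1}. What your approach costs is self-containedness: within this paper's architecture, Proposition~\ref{prop2.2} feeds Lemma~\ref{lem6.1}, which feeds Theorem~\ref{thm1.4} and Proposition~\ref{thm1.2}, from which the authors remark that the classical Oka theorem can be recovered; importing Oka's theorem into the proof of Proposition~\ref{prop2.2} would make that chain circular. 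This is not a logical error --- Oka's theorem has independent classical proofs --- but it uses deep function theory where a soft direct-sum observation suffices, and it runs against the paper's program of deriving Oka-type representations from operator theory rather than the reverse.
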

\begin{proof}
As $\lambda^r$ is holomorphic on a neighborhood of $\kdel$, a) implies b). That b) implies c) follows immediately from \eqref{eq2.1}. Suppose that c) holds. If $\phi$ is holomorphic on a neighborhood of $\kdel$ yet $\phi \notin \hinfdel$, then there exist $T_n \in \fdel^0$ such that
\be \label{eq2.4}
\norm{\phi(T_n)} \to \infty.
\ee
As $\fdel^0$ is bounded, $T = \oplus{T_n} \in \fdel$. Hence by Lemma \ref{lem2.3}, $\sigma(T) \subseteq \kdel$ and $\phi(T)$ is a well defined operator. But then
$$\norm{\phi(T_n)} \le \norm{\oplus{\phi(T_n)}} = \norm{\phi(\oplus{T_n})} = \norm{\phi(T)}$$
contradicting \eqref{eq2.4}.
\end{proof}


\section{Hereditary Calculus}
\label{sec3}

For $G$ an open set in ${\mathbb{C}}^d$, we let ${\mathcal{H}}(G)$ denote the collection of functions, $h=h(\lambda,\mu)$, defined for $(\lambda,\mu) \in G \times G$, such that $h$ is holomorphic in $\lambda$ on $G$ for each fixed $\mu \in G$ and $h$ is anti-holomorphic 
(i.e. $\overline{h}$ is holomorphic) in $\mu$ on $G$ for each fixed $\lambda \in G$. If we equip ${\mathcal{H}}(G)$ with the topology of uniform convergence on compact subsets of $G \times G$, then ${\mathcal{H}}(G)$ is a locally convex topological vector space with a topology induced by a complete translation invariant metric. Furthermore
${\mathcal{H}}(G)$ is isomorphic as a topological vector space with $\overline{{\rm Hol}(G)} \otimes {\rm Hol}(G)$, the completion of the projective tensor product,  via the
 continuous linear extension to $\overline{{\rm Hol}(G)} \otimes {\rm Hol}(G)$ of the bilinear map defined by
$$ \overline{{\rm Hol}(G)} \otimes {\rm Hol}(G) \owns \overline{g(\mu)} \otimes f(\lambda) \mapsto \overline{g(\mu)}f(\lambda) \in {\mathcal{H}}(G).$$
(See \cite[Thm. 51.6]{tre}.

In particular, if $B$ is a Banach space and $u:\overline{{\rm Hol}(G)} \times {\rm Hol}(G) \to B$ is a jointly continuous $B$-valued bilinear map, then there exists a continuous linear map $\Gamma:{\mathcal{H}}(G) \to B$ such that
\be\label{eq3.1}
u(\overline{g(\mu)},f(\lambda)) = \Gamma(\overline{g(\mu)}f(\lambda))
\ee
for all $f,g \in {\rm Hol}(G)$ (see \cite[p. 325]{Jar81}).

If $\mathcal{H}$ is a Hilbert space, we let $\lofh$ denote the $C$*-algebra of bounded operators on $\mathcal{H}$. If $T = (T^1,\ldots,T^d)$ is a $d$-tuple of pairwise commuting elements of $\mathcal{L}(\mathcal{H})$ and $\sigma(T) \subseteq G$, then by the continuity of the functional calculus, $u$, defined by $u(\overline{g},f) = {g(T)}^*f(T)$, is a jointly continuous $\lofh$-valued bilinear map on $\overline{{\rm Hol}(G)} \times {\rm Hol}(G)$. Hence, if $\Gamma:{\mathcal{H}}(G) \to \lofh$ is defined by \eqref{eq3.1}, we can define the \emph{hereditary calculus for $T$} by setting $h(T)=\Gamma(h)$ for all $h \in \h$. Note that with this definition, we have that
\be \label{eq3.2}
\big{[}\overline{g(\mu)}h(\lambda,\mu)f(\lambda)\big{]}(T)={g(T)}^*h(T)f(T)
\ee
for all $f,g \in {\rm Hol}(G)$ and all $h \in \hofg$.

For $a \in \hofg$ we define $a^* \in \hofg$ by $a^*(\lambda,\mu)=\overline{a(\mu,\lambda)}$. Note that with this notation, \eqref{eq3.2} takes on the more pleasing form,
\be \label{eq3.2.1}
(g^*hf)(T)={g(T)}^*h(T)f(T).
\ee
We define $\rofg=\{a \in \hofg | a=a^*\}$ and observe that $\rofg$ is a real locally convex space with the induced topology from $\hofg$. Also, as $h^*(T)={h(T)}^*$ whenever $\sigma(T) \subseteq G$ and $h \in \hofg$, we see that if $\sigma(T) \subseteq G$ and $a \in \rofg$, then $a(T)$
 is self adjoint. We say that $a \in \hofg$ is \emph{positive semidefinite}, and write $a \ge 0$, if
\be \label{eq3.3}
\sum\limits_{i,j=1}^n a(\lambda_j,\lambda_i)c_j\overline{c_i} \ge 0,
\ee
whenever $n$ is a positive integer, $\lambda_1, \ldots, \lambda_n \in G$, and $c_1, \ldots, c_n \in \mathbb{C}.$
We set $\pofg=\set{a \in \hofg}{a \ge 0}$. It follows easily from \eqref{eq3.3} that $\pofg$ is a closed cone in $\rofg$.
\begin{prop}\label{prop3.1}
If $a \in \hofg$, then $a \in \pofg$ if and only if there exist a Hilbert space $\mathcal{M}$ and a holomorphic map $u:G \to \mathcal{M}$ such that
\be\label{eq2.4x}
a(\lambda,\mu)={ \langle u(\lambda),u(\mu) \rangle}_{\mathcal{M}}
\ee
for all $\lambda,\mu \in G$.
\end{prop}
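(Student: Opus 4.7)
My plan is to prove this via the classical reproducing--kernel construction (Moore--Aronszajn). The implication from the kernel representation back to positive semidefiniteness is immediate: assuming $a(\lambda,\mu) = \langle u(\lambda), u(\mu)\rangle_\mathcal{M}$, holomorphy in $\lambda$ and antiholomorphy in $\mu$ hold because the inner product is linear in the first slot and conjugate--linear in the second, and the inequality \eqref{eq3.3} becomes
$$\sum_{i,j} a(\lambda_j,\lambda_i) c_j \bar{c_i} \;=\; \Big\| \sum_j c_j u(\lambda_j)\Big\|_\mathcal{M}^2 \;\geq\; 0.$$

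For the nontrivial direction, I would construct $\mathcal{M}$ from $a$ itself. Let $V$ be the free complex vector space on the symbol set $\{e_\mu : \mu \in G\}$, and define a sesquilinear form on $V$ by
$$\Big\langle \sum_j c_j e_{\mu_j}, \sum_i d_i e_{\nu_i}\Big\rangle \;:=\; \sum_{i,j} c_j \bar{d_i}\, a(\mu_j,\nu_i).$$
Condition \eqref{eq3.3} says exactly that this form is positive semidefinite on $V$. By Cauchy--Schwarz, the set $N = \{v \in V : \langle v, v\rangle = 0\}$ is a linear subspace, so the form descends to an honest inner product on $V/N$; completing in the induced norm yields a Hilbert space $\mathcal{M}$. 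Defining $u(\mu)$ to be the image of $e_\mu$ in $\mathcal{M}$, the identity $\langle u(\lambda), u(\mu)\rangle_\mathcal{M} = a(\lambda,\mu)$ is then built into the construction.

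The main obstacle I anticipate is verifying that $u \colon G \to \mathcal{M}$ is genuinely holomorphic as a vector--valued map. My strategy is to check weak holomorphy first and then invoke the standard fact that weakly holomorphic functions into a Hilbert (or more generally Banach) space are strongly holomorphic. On the dense subspace spanned by $\{u(\nu_i)\}$, the scalar function $\lambda \mapsto \langle u(\lambda), \sum_i d_i u(\nu_i)\rangle_\mathcal{M} = \sum_i \bar{d_i}\, a(\lambda,\nu_i)$ is plainly holomorphic because $a \in \hofg$. For an arbitrary $v \in \mathcal{M}$, approximate by such sums $v_n \to v$; since $\|u(\lambda)\|_\mathcal{M}^2 = a(\lambda,\lambda)$ is continuous and hence locally bounded on $G$, the estimate
$$|\langle u(\lambda), v - v_n \rangle_\mathcal{M}|\;\leq\; \|u(\lambda)\|_\mathcal{M}\, \|v - v_n\|_\mathcal{M}$$
shows the approximating holomorphic functions converge locally uniformly on $G$, so the limit $\lambda \mapsto \langle u(\lambda), v\rangle_\mathcal{M}$ is holomorphic. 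Weak holomorphy for every $v \in \mathcal{M}$ therefore gives strong holomorphy of $u$, which completes the proof.
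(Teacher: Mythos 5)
Your proposal is correct and follows essentially the same route as the paper: the Moore--Aronszajn construction of $\mathcal{M}$ from the positive semidefinite kernel $a$, followed by verification that $u$ is weakly holomorphic (checking against the dense span of the $u(\nu)$'s and extending by density using the local boundedness of $\|u(\lambda)\|^2=a(\lambda,\lambda)$), and then the standard weak-implies-strong holomorphy principle. The only cosmetic difference is that the paper realizes $\mathcal{M}$ concretely as a reproducing kernel Hilbert space of anti-holomorphic functions on $G$, whereas you build it abstractly by quotient and completion; the substance of the argument is identical.
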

\begin{proof}
By Aronszjan's construction \cite{aro50,ampi} there is a Hilbert space
$\mathcal{M}$ of functions on $G$, with reproducing kernel $k$, such that
\be\label{eq3.5}
a(\lambda,\mu)={\langle k_{\lambda},k_{\mu} \rangle}_{\mathcal{M}} \qquad \lambda,\mu \in G.
\ee
Since $a$ is anti-holomorphic in $\mu$ for each fixed $\lambda \in G$,
\eqref{eq3.5} implies that if $f \in {\rm span}\set{k_\lambda}{\lambda \in G}$, 
then $f$ is anti-holomorphic on $G$. Since ${\rm span}\set{k_\lambda}{\lambda \in G}$ is dense in $\mathcal{M}$ and ${\norm{k_\mu}}^2=a(\mu,\mu)$ is bounded on compact subsets of 
$G$, in fact $f$ is anti-holomorphic on $G$ for all $f \in \mathcal{M}$. Hence, if we define $u(\lambda)=k_{\lambda}$, then $\langle u(\lambda),f \rangle=\overline{f(\lambda)}$ is holomorphic for all $f \in \mathcal{M}$, and we see that $u$ is weakly holomorphic on $G$. As $u$ is weakly holomorphic, $u$ is holomorphic. \eqref{eq2.4x} follows from \eqref{eq3.5}.
\end{proof}
\begin{lem}\label{sumofdyads}
If $a \in \pofg$, then there exists a countable sequence $\{f_i\}$ in ${\rm Hol}(G)$ such that $$a(\lambda,\mu)=\sum_i\overline{f_i(\mu)}f_i(\lambda).$$
\end{lem}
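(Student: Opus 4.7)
The plan is to leverage Proposition~\ref{prop3.1} to obtain a Hilbert-space realization of $a$ and then expand in an orthonormal basis.

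First, by Proposition~\ref{prop3.1} there is a Hilbert space $\mathcal{M}$ and a holomorphic map $u:G\to\mathcal{M}$ such that
\[
a(\lambda,\mu)\ =\ \langle u(\lambda),u(\mu)\rangle_{\mathcal{M}}
\qquad \forall\,\lambda,\mu\in G.
\]
The next step is to arrange separability. Since $G$ is an open subset of $\mathbb{C}^d$, it is second countable, and $u$ is in particular continuous, so $u(G)$ is a separable subset of $\mathcal{M}$. Replacing $\mathcal{M}$ by $\mathcal{M}_0:=\overline{\mathrm{span}}\,u(G)$ does not change the inner product $\langle u(\lambda),u(\mu)\rangle$, so we may assume $\mathcal{M}$ is separable. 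Then $\mathcal{M}$ admits a countable orthonormal basis $\{e_i\}$.

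Now define $f_i:G\to\mathbb{C}$ by $f_i(\lambda):=\langle u(\lambda),e_i\rangle_{\mathcal{M}}$. Because $u$ is holomorphic, each linear functional of $u$ is holomorphic, so $f_i\in\mathrm{Hol}(G)$. Expanding $u(\lambda)$ in the basis $\{e_i\}$ and using Parseval gives
\[
a(\lambda,\mu)\ =\ \langle u(\lambda),u(\mu)\rangle\ =\ \sum_i\langle u(\lambda),e_i\rangle\,\overline{\langle u(\mu),e_i\rangle}\ =\ \sum_i\overline{f_i(\mu)}f_i(\lambda),
\]
which is the required representation; convergence holds pointwise since $\{\langle u(\lambda),e_i\rangle\}$ and $\{\langle u(\mu),e_i\rangle\}$ are $\ell^2$ sequences.

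The potential obstacle is the separability reduction: one must check that restricting $\mathcal{M}$ to the closed span of $u(G)$ is legitimate, i.e.\ that the values of $u$ still generate the inner products we need. This is immediate because the inner product $\langle u(\lambda),u(\mu)\rangle$ only involves vectors lying in $\mathcal{M}_0$ by construction. Everything else is routine Hilbert-space bookkeeping.
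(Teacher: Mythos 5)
Your proposal is correct and follows essentially the same route as the paper's proof: apply Proposition~\ref{prop3.1}, pass to the separable closed span of $u(G)$, pick a countable orthonormal basis $\{e_i\}$, and set $f_i(\lambda)=\langle u(\lambda),e_i\rangle$. You merely spell out the Parseval step and the separability argument that the paper leaves implicit.
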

\begin{proof}
By Proposition \ref{prop3.1} there there exist a Hilbert space $\mathcal{M}$ and a holomorphic map $u:G \to \mathcal{M}$ such that \eqref{eq2.4} holds. As $u$ is holomorphic, ${\mathcal{M}}_0$, the closed linear span of $\set{u(\lambda)}{\lambda \in G}$ in $\mathcal{M}$ is separable. Let $\{e_i\}$ be a countable basis for $\mathcal{M}_0$ and define $f_i$ by
 $f_i={\langle u(\lambda),e_i \rangle }_\mathcal{M}$.
\end{proof}
\begin{lem}\label{haoftpos}
Let $h \in \rofg$, $a \in \pofg$, and assume that $T$ is a $d$-tuple of pairwise commuting operators with $\sigma(T) \subseteq G$. If $h(T) \ge 0$, then $(ha)(T) \ge 0$.
\end{lem}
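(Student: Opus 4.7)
The plan is to use Lemma \ref{sumofdyads} to write $a$ as a countable sum of rank-one kernels $\overline{f_i(\mu)} f_i(\lambda)$, and then apply the multiplicative property \eqref{eq3.2.1} of the hereditary calculus term-by-term. For each $i$, the function $b_i(\lambda,\mu) := \overline{f_i(\mu)} h(\lambda,\mu) f_i(\lambda) = (f_i^\ast \, h \, f_i)(\lambda,\mu)$ lies in $\mathcal{H}(G)$, and \eqref{eq3.2.1} gives $b_i(T) = f_i(T)^\ast h(T) f_i(T)$, which is positive because $h(T)\ge 0$ by hypothesis. So each partial sum $S_N := \sum_{i=1}^N b_i$ satisfies $S_N(T) \ge 0$, and it suffices to show $S_N(T) \to (ha)(T)$ in $\mathcal{L}(\mathcal{H})$, since the cone of positive operators is norm-closed.

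For the convergence, I would note that if $u\colon G \to \mathcal{M}_0$ is the holomorphic map from Proposition~\ref{prop3.1} (with $\mathcal{M}_0$ separable as in the proof of Lemma~\ref{sumofdyads}) and $P_N$ denotes the orthogonal projection onto $\mathrm{span}\{e_1,\dots,e_N\}$, then
\[
a(\lambda,\mu) - \sum_{i=1}^N \overline{f_i(\mu)}\, f_i(\lambda) \;=\; \bigl\langle (I-P_N)u(\lambda),\,(I-P_N)u(\mu)\bigr\rangle.
\]
For any compact $K \subseteq G$, continuity of $u$ makes $u(K)$ compact in $\mathcal{M}_0$, so $P_N \to I$ uniformly on $u(K)$. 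By Cauchy--Schwarz the partial sums $a_N := \sum_{i=1}^N \overline{f_i(\mu)} f_i(\lambda)$ converge to $a$ uniformly on $K \times K$, that is, $a_N \to a$ in the topology of $\mathcal{H}(G)$.

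Now $h\in\mathcal{H}(G)$ is bounded on every set of the form $K\times K$, so multiplication by $h$ is a continuous operator on $\mathcal{H}(G)$; hence $S_N = h\,a_N \to h\,a$ in $\mathcal{H}(G)$. Since the hereditary calculus $\Gamma\colon \mathcal{H}(G)\to\mathcal{L}(\mathcal{H})$ is continuous, $S_N(T) \to (ha)(T)$ in the norm topology of $\mathcal{L}(\mathcal{H})$. Each $S_N(T) = \sum_{i=1}^N f_i(T)^\ast h(T) f_i(T) \ge 0$, so the limit $(ha)(T)$ is positive semidefinite, completing the proof.

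The only genuine obstacle is the convergence step, and it turns reduces to the observation that projections in a separable Hilbert space converge uniformly on compact sets. Once this is in hand, the rest is just invoking continuity of the hereditary calculus and the already-established identity \eqref{eq3.2.1}.
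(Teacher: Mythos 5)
Your proof is correct and follows the same route as the paper's: decompose $a$ via Lemma \ref{sumofdyads}, apply \eqref{eq3.2.1} term by term, and pass to the limit. The paper's own proof is a two-line version of this argument; the convergence details you supply (uniform convergence of the partial sums on compacts via $P_N \to I$ on the compact set $u(K)$, continuity of multiplication by $h$, and continuity of $\Gamma$) are exactly the justification the paper leaves implicit.
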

\begin{proof}
By Lemma \ref{sumofdyads} there is a sequence $\{f_i\}$ in ${\rm Hol}(G)$ such that $a=\sum_i{f_i}^*f_i$. Hence, if $h(T) \ge 0$,
$$(ha)(T) = \sum_i(f_i^*hf)(T) = \sum_i{f_i(T)}^*h(T)f(T) \ge0.$$
\end{proof}

\begin{defin}
We say that $\c \subseteq \rofg $ is a {\bf hereditary cone}
 on $G$ if $\c$ is a cone in $\rofg$ with the property that $ha \in \c$ whenever $h \in \c$ and $a \in \pofg$.
\end{defin}

One way to construct hereditary cones is to let $\Omega$ be a subset of $\rofg$ and to define $\langle \Omega \rangle$ by
$$\langle \Omega \rangle=\set{\sum_{i=1}^{n} h_ia_i}{n \in \mathbb{N}, h_1, \ldots,h_n \in \Omega, a_1, \ldots,a_n \in \pofg}.$$
Evidently, $\langle \Omega \rangle$ is the hereditary cone generated by $\Omega$, i.e. the smallest hereditary cone $\c \subseteq \rofg$ such that $\c \supseteq \Omega$. 
\begin{defin}\label{def4.1}
For $\f$ a collection of pairwise commuting operator $d$-tuples and $G$ an open set in $\cd$, we define $\fperp(G) \subseteq \rofg$ by
$$\fperp(G) = \set{h \in \rofg}{h(T) \ge 0 \text{ whenever }T \in \f \text{ and } \sigma(T) \subseteq G}.$$
\end{defin}
\begin{lem}\label{lem4.3}
If $\f$ is a collection of operators and $G$ is an open set in $\cd$, then $\fperp(G)$ is a hereditary cone on $G$.
\end{lem}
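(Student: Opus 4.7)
The proof is a direct verification, and essentially all the heavy lifting has already been done in Lemma~\ref{haoftpos}.

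The plan is to check the two clauses of the definition of hereditary cone separately. First I would verify that $\fperp(G)$ is a cone in $\rofg$. This follows from the linearity of the hereditary functional calculus: given $h_1, h_2 \in \fperp(G)$ and scalars $s, t \ge 0$, and any $T \in \f$ with $\sigma(T) \subseteq G$, one computes
\[
(sh_1 + th_2)(T) \ = \ sh_1(T) + th_2(T) \ \ge \ 0,
\]
since each $h_i(T) \ge 0$ by hypothesis. Note also that $sh_1 + th_2$ lies in $\rofg$ because $\rofg$ is a real vector space and $\fperp(G) \subseteq \rofg$ by definition.

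Second, I would verify the hereditary property. Suppose $h \in \fperp(G)$ and $a \in \pofg$. For any $T \in \f$ with $\sigma(T) \subseteq G$ we have $h(T) \ge 0$ by the definition of $\fperp(G)$, and then Lemma~\ref{haoftpos} applied to this $T$ gives $(ha)(T) \ge 0$. Since $T$ was arbitrary in $\f$ with spectrum in $G$, this shows $ha \in \fperp(G)$ (noting that $ha \in \rofg$ because $(ha)^* = a^* h^* = ah = ha$, using that both $h$ and $a$ are self-adjoint in the hereditary sense and $\rofg$ is a commutative product).

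There is no real obstacle; the only point requiring care is the algebra establishing $ha \in \rofg$, and the invocation of Lemma~\ref{haoftpos}, which already packaged the Aronszajn-type decomposition $a = \sum_i f_i^* f_i$ together with the multiplicativity identity $(g^* h f)(T) = g(T)^* h(T) f(T)$ from \eqref{eq3.2.1}.
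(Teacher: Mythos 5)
Your proof is correct and follows the same route as the paper: the paper's proof consists exactly of the observation that for $h \in \fperp(G)$, $a \in \pofg$, and $T \in \f$ with $\sigma(T) \subseteq G$, Lemma~\ref{haoftpos} gives $(ha)(T) \ge 0$, hence $ha \in \fperp(G)$. Your additional verifications (the cone property via linearity of the hereditary calculus, and $ha \in \rofg$ via commutativity of the pointwise product) are correct points that the paper leaves implicit.
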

\begin{proof}
Let $h \in \fperp(G)$ and $a \in \pofg$. If $T \in \f$ and $\sigma(T) \subseteq G$, then $h(T) \ge 0$. Hence, by Lemma \ref{haoftpos}, $(ha)(T) \ge 0$. Thus, $ha \in \fperp(G)$.
\end{proof}
\section{The Realization Formula}
\label{sec6}



We record the following two simple lemmas for future use. We choose to deduce them as corollaries of  Proposition \ref{prop2.2}. Alternative 
direct, constructive proofs of them are obtainable based either on the theory of power series or iterated Cauchy-Riesz-Dunford integrals.
\begin{lem}\label{lem6.1}
If $\Phi$ is holomorphic on a neighborhood of $\cldm$, then $\Phi \in \hinfm$.
\end{lem}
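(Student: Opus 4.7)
The plan is to obtain this as a direct specialization of Proposition~\ref{prop2.2} in the case $d=m$ and $\delta = {\bf m}$, the identity polynomial on $\mathbb{C}^m$. With that choice, $\delta_l(\lambda) = \lambda^l$, so $G_\delta = \mathbb{D}^m$, $K_\delta = (\mathbb{D}^-)^m$, and $\mathcal{F}_\delta$ coincides with $\mathcal{F}_m$, the family of $m$-tuples of pairwise commuting contractions. The definition \eqref{eq1.7} of $\normdel{\cdot}$ then agrees with the norm $\normm{\cdot}$ of Definition~\ref{def1.1}, so $\hinfdel$ in this specialization is exactly $\hinfm$.

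With this translation in hand, it suffices to verify condition (c) of Proposition~\ref{prop2.2}, namely that
\[
\fm^0 \ = \ \{ C \in \fm : \sigma(C) \subseteq \dm \}
\]
is a bounded set of $m$-tuples. This is immediate from the definition of $\fm$: for any $C = (C_1,\ldots,C_m) \in \fm$ each coordinate satisfies $\|C_r\| \le 1$, so the collection is uniformly bounded (in fact by $1$) in each component. Equivalently, one may verify the formally weaker condition (b): the coordinate function $\lambda^r$ has $\|\lambda^r(C)\|_{\bf m} = \|C_r\| \le 1$, so $\lambda^r \in \hinfm$ for every $r$.

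Proposition~\ref{prop2.2} then gives the implication (c) $\Rightarrow$ (a): every function holomorphic on a neighborhood of $K_\delta = \cldm$ lies in $\hinfdel = \hinfm$. Applied to the given $\Phi$, this is exactly the conclusion of the lemma.

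There is essentially no obstacle here; the only point to confirm is the identification of the norms, which is recorded in Subsection~\ref{subsec1.4}. The alternative constructive routes mentioned in the paper (expanding $\Phi$ in a power series on a slightly larger polydisk and using $\sum \|C^\alpha\| r^{-|\alpha|} < \infty$ for $\|C_r\| \le 1$, or applying an iterated Cauchy--Riesz--Dunford integral over a product of contours in the domain of holomorphy of $\Phi$) would yield an explicit bound on $\normm{\Phi}$ in terms of $\sup_{|z_r| = s} |\Phi(z)|$ for some $s > 1$, but neither is needed once Proposition~\ref{prop2.2} is available.
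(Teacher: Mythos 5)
Your proof is correct and matches the paper's own argument, which likewise derives the lemma as an immediate consequence of Proposition~\ref{prop2.2} specialized to $\delta={\bf m}$ (the paper invokes the implication b)$\,\Rightarrow\,$a), while you verify both b) and c); either is trivially satisfied here). The identification of $\normbm{\cdot}$ with $\normm{\cdot}$ that you pause to confirm is indeed the only point requiring care, and it is recorded in Subsection~\ref{subsec1.4}.
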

\begin{proof}
The lemma is an immediate consequence of b) implies a) in Proposition \ref{prop2.2}.
\end{proof}
\begin{lem}\label{lem6.2}
If $s > 1$, then $H^\infty(s\dm) \subseteq \hinfm$. Furthermore, there exists a constant $c$
depending on $s$ such that
$$\normm{\Phi} \le c\sup_{\lambda \in \dm}\abs{\Phi(\lambda)}$$
for all $\Phi \in H^\infty(s\dm)$.
\end{lem}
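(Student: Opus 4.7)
The inclusion $H^\infty(s\dm)\subseteq\hinfm$ is immediate from Lemma~\ref{lem6.1}: since $s>1$, $s\dm$ is an open neighborhood of $\cldm$, so every $\Phi\in H^\infty(s\dm)$ is holomorphic on a neighborhood of $\cldm$ and therefore lies in $\hinfm$.

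For the quantitative bound, the natural first step is a Cauchy--type integral representation of $\Phi(C)$ for $C\in\fm$ with $\sigma(C)\subseteq\dm$. Fix $r$ with $1<r<s$. Since $\Phi$ is holomorphic on a neighborhood of $(r\mathbb{D}^-)^m\supseteq\sigma(C)$, Taylor's functional calculus yields
\[
\Phi(C) \;=\; \frac{1}{(2\pi i)^m}\oint_{|\zeta_1|=r}\!\cdots\!\oint_{|\zeta_m|=r}\Phi(\zeta)\,\prod_{j=1}^m(\zeta_j-C_j)^{-1}\,d\zeta_1\cdots d\zeta_m,
\]
and because each $C_j$ is a contraction while $|\zeta_j|=r>1$, the Neumann series $(\zeta_j-C_j)^{-1}=\sum_{n\ge 0}\zeta_j^{-n-1}C_j^n$ converges in operator norm with $\|(\zeta_j-C_j)^{-1}\|\le(r-1)^{-1}$. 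Pushing norms through the integral produces
\[
\|\Phi(C)\| \;\le\; \left(\frac{r}{r-1}\right)^m \sup_{|\zeta_j|=r}|\Phi(\zeta)|,
\]
which, taking the supremum over $C$, bounds $\normm{\Phi}$ by a constant depending only on $s$ (set, e.g., $r=(1+s)/2$) times $\sup_{r\dm}|\Phi|$.

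\textbf{The main obstacle} is that this estimate naturally involves a supremum over the polytorus of radius $r>1$, whereas the lemma asserts control by $\sup_\dm|\Phi|$. To close this gap while keeping the constant dependent only on $s$, my plan is to apply the Cauchy argument to the dilated function $\Phi_\rho(z):=\Phi(\rho z)$, which for $\rho\in(1/s,1)$ is holomorphic on $(s/\rho)\dm$, choosing the contour radius so that $\rho r<1$. This estimate then bounds $\|\Phi_\rho(C)\|=\|\Phi(\rho C)\|$ in terms of $\sup_\dm|\Phi|$ with a constant depending on $\rho$ and $s$; passing to the limit $\rho\to 1^-$ (using continuity of Taylor's functional calculus to conclude $\Phi(\rho C)\to\Phi(C)$) and balancing the competing rates of the resolvent blow-up against the cushion provided by the extension to $s\dm$ should yield the desired constant $c=c(s)$ with $\normm{\Phi}\le c\sup_\dm|\Phi|$. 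Carrying out this balancing so that the final constant depends on $s$ alone is the delicate technical step; all other ingredients are standard resolvent calculus and Lemma~\ref{lem6.1}.
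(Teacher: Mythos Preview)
The paper gives no explicit proof of this lemma; it asserts that it follows from Proposition~\ref{prop2.2} and notes that direct constructive proofs are available via power series or iterated Cauchy--Riesz--Dunford integrals. Your Cauchy integral estimate is exactly this last alternative and is correct: for any fixed $1<r<s$ it yields $\normm{\Phi}\le (r/(r-1))^m\sup_{r\mathbb{T}^m}|\Phi|\le c(s)\sup_{\lambda\in s\dm}|\Phi(\lambda)|$. The route through Proposition~\ref{prop2.2} presumably runs via the closed graph theorem: Lemma~\ref{lem6.1} gives the set-theoretic inclusion $H^\infty(s\dm)\subseteq\hinfm$, both sides are Banach spaces whose norms dominate the sup norm on $\dm$, so the inclusion is continuous and one again obtains $\normm{\Phi}\le c\sup_{s\dm}|\Phi|$. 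Either way the constant multiplies the supremum over $s\dm$ (equivalently over $r\dm$ for some $1<r<s$), and this is all that the sole application of the lemma, in the proof of Theorem~\ref{thm1.4}, requires.

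Your dilation scheme to upgrade the bound to $\sup_{\dm}|\Phi|$ has a genuine gap. Requiring $\rho r<1$ while sending $\rho\to 1^-$ forces $r\to 1^+$, so the factor $(r/(r-1))^m$ blows up; the hypothesis $\Phi\in H^\infty(s\dm)$ supplies no counterweight in that limit, and there is nothing to balance. Indeed, the literal inequality $\normm{\Phi}\le c(s)\sup_{\dm}|\Phi|$, specialized to polynomials (which lie in $H^\infty(s\dm)$ for every $s>1$), would yield a uniform constant $K_m$ with $\normm{p}\le K_m\|p\|_\infty$ for all polynomials $p$ --- for $m\ge 3$ this is certainly not a routine corollary of Proposition~\ref{prop2.2} or of a Cauchy estimate. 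Read the displayed supremum as taken over $s\dm$; with that reading your Cauchy argument, stopped before the dilation step, already finishes the proof.
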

If $\mathcal{B}$ is a Banach space, we let $ball(\mathcal{B})$ denote the closed unit ball of $\mathcal{B}$.
\begin{defin}\label{def6.1}
Let $\phi$ be a function on $\dm$. We say a $4$-tuple $(a,\beta,\gamma,D)$ is a realization for $\phi$ if $a \in \mathbb{C}$ and there exists a decomposed Hilbert space, $\m = \oplus_{l=1}^m \m_l$, such that the $2\times2$ matrix,
$$V=\begin{bmatrix}a&1\otimes\beta\\\gamma\otimes1&D\end{bmatrix},$$
acts isometrically on $\mathbb{C}\oplus\m$, $z$ acts on $\m$ via the formula,
$$z(\oplus_{l=1}^m x_l)=\oplus_{l=1}^m z_lx_l,$$
and
$$\phi(z)=a+\langle z{(1-Dz)}^{-1}\gamma,\beta \rangle$$
for all $z \in \dm$.
\end{defin}

The following theorem was proved in \cite{ag90}.
\begin{thm}\label{thm6.1}
Let $\phi$ be a function defined on $\dm$. The following are equivalent.
\begin{align*}
&a) \qquad \phi \in ball(\hinfm)\\
&b) \qquad 1-\phi^*\phi \in {\mathcal F}_m^\perp\\
&c) \qquad \phi \text{ has a realization.}
\end{align*}
\end{thm}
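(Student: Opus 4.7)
The plan is to prove the cycle (a) $\Rightarrow$ (b) $\Rightarrow$ (c) $\Rightarrow$ (a); the first and third implications are bookkeeping, with the substantive work concentrated in (b) $\Rightarrow$ (c). For (a) $\Rightarrow$ (b), if $\phi \in ball(\hinfm)$ and $T \in \fm$ with $\sigma(T) \subseteq \dm$, then $\|\phi(T)\| \le 1$ and the hereditary calculus identity \eqref{eq3.2.1} yield $(1-\phi^*\phi)(T) = I - \phi(T)^*\phi(T) \ge 0$, so $1-\phi^*\phi \in \fm^\perp(\dm)$. For (c) $\Rightarrow$ (a), a direct transfer-function calculation using the isometry $V$ (expand $(1-Dz)^{-1}$ as a geometric series and use $V^*V = I$) produces the identity
$$ 1 - \overline{\phi(\mu)}\phi(\lambda) \;=\; \sum_{l=1}^m \bigl(1 - \overline{\mu_l}\lambda_l\bigr)\, a_l(\lambda,\mu), $$
with each $a_l \in \pofg$ for $G = \dm$; applying the hereditary calculus at any $T \in \fm$ with $\sigma(T) \subseteq \dm$ and invoking Lemma~\ref{haoftpos} gives $I - \phi(T)^*\phi(T) \ge 0$, hence $\|\phi(T)\| \le 1$.

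The nontrivial direction (b) $\Rightarrow$ (c) proceeds in three steps. First, prove that $\fm^\perp(\dm)$ coincides with the hereditary cone $\langle\, 1-\overline{\mu_l}\lambda_l : 1 \le l \le m\,\rangle$. The inclusion $\supseteq$ follows from Lemma~\ref{lem4.3} once one observes that each generator lies in $\fm^\perp(\dm)$ (since $\|T^l\| \le 1$ means $I - (T^l)^*T^l \ge 0$). For $\subseteq$, if some $h \in \fm^\perp(\dm)$ were missing from the closure of the generated cone, Hahn-Banach in $\rofg$ supplies a separating continuous $\mathbb{R}$-linear functional; a GNS-type construction on the resulting state produces a Hilbert space on which coordinate multiplication extends to a commuting tuple $T$ of contractions with $\sigma(T) \subseteq \dm$ on which $h(T) \not\ge 0$, contradicting $h \in \fm^\perp(\dm)$. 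Second, apply this to $h = 1-\phi^*\phi$ to obtain the displayed Agler decomposition, then Kolmogorov-factor (Proposition~\ref{prop3.1}) each $a_l(\lambda,\mu) = \langle u_l(\lambda), u_l(\mu)\rangle_{\m_l}$ with $u_l : \dm \to \m_l$ holomorphic. Third, rearrange the decomposition as
$$ 1 + \sum_{l=1}^m \overline{\mu_l}\lambda_l\,\langle u_l(\lambda), u_l(\mu)\rangle \;=\; \overline{\phi(\mu)}\phi(\lambda) + \sum_{l=1}^m \langle u_l(\lambda), u_l(\mu)\rangle, $$
the ``lurking isometry'' identity, which asserts that the assignment $1 \oplus \bigoplus_l \lambda_l u_l(\lambda) \mapsto \phi(\lambda) \oplus \bigoplus_l u_l(\lambda)$ preserves inner products on the dense subspaces it connects, and so (after enlarging if needed) extends to an isometry $V$ on $\mathbb{C} \oplus \m$; reading off its $2 \times 2$ block entries in the decomposition $\mathbb{C}\oplus\bigoplus_l \m_l$ delivers the required $(a,\beta,\gamma,D)$.

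The principal obstacle is the first step. The cone $\langle\, 1-\overline{\mu_l}\lambda_l\,\rangle$ is not obviously closed in the Fr\'echet topology of $\rofg$, so Hahn-Banach only separates from its closure; one must then verify that the separating state can be realized by a GNS construction yielding contractions whose joint spectrum lies inside the open polydisk $\dm$, not merely in $\overline{\dm}$. A standard rescaling $\lambda \mapsto r\lambda$ with $r \uparrow 1$, combined with a normal-family argument to pass to the limit, should resolve this, but getting the functional-analytic separation and the spectral-localization to coexist cleanly is the technical heart of the argument.
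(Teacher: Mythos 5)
The paper does not actually prove Theorem~\ref{thm6.1}: it quotes it from \cite{ag90}, and your proposal is essentially a reconstruction of the argument in that source (and the one Ambrozie--Timotin adapt for Theorem~\ref{thm6.2}). Your (a)$\Rightarrow$(b) and (c)$\Rightarrow$(a) are correct: the latter is the standard transfer-function computation, which from $V^*V=I$ gives
\begin{equation*}
1-\overline{\phi(\mu)}\phi(\lambda)=\sum_{l=1}^m\bigl(1-\overline{\mu_l}\lambda_l\bigr)\bigl\langle P_l(1-Dz(\lambda))^{-1}\gamma,\;P_l(1-Dz(\mu))^{-1}\gamma\bigr\rangle ,
\end{equation*}
with $P_l$ the projection onto $\m_l$, after which Lemma~\ref{haoftpos} finishes. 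The lurking-isometry step in (b)$\Rightarrow$(c) is also standard and correct as you state it.

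The gap is exactly where you say it is, and as written your resolution of it does not yet work. Two points. First, a piece of good news you missed: positivity of the GNS form is automatic, because the Szeg\H{o} kernels $(1-\overline{\mu_l}\lambda_l)^{-1}$ are positive semidefinite on $\mathbb{D}^m$, so $f^*f=\sum_l(1-\overline{\mu_l}\lambda_l)\cdot\frac{1}{m}f^*(1-\overline{\mu_l}\lambda_l)^{-1}f$ exhibits every square (hence all of $\mathcal{P}(\mathbb{D}^m)$) as lying in the generated cone; thus any functional nonnegative on the cone is a state and coordinate multiplications are contractions on the GNS space. Second, the genuinely unresolved issue is that this GNS space is in general infinite dimensional, so the resulting tuple $T$ need only have $\sigma(T)\subseteq(\mathbb{D}^-)^m$, not $\sigma(T)\subseteq\dm$; then $h(T)$ need not be defined and no contradiction with $h\in\mathcal{F}_m^\perp(\dm)$ is obtained. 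A vague ``rescale and use normal families'' does not fix this, because rescaling changes which cone you are separating from. The standard repair is to localize to finite subsets $F\subseteq\dm$: there the cone lives in a finite-dimensional space (where one proves closedness by bounding the Kolmogorov factors on the diagonal by $h(\lambda,\lambda)/(1-|\lambda_l|^2)$ and extracting limits), the GNS space is finite dimensional, and $T$ is a commuting matrix tuple with joint spectrum equal to $F\subseteq\dm$ --- this is precisely the duality the paper invokes in Theorem~\ref{thm8.1}. One then obtains Agler decompositions of $1-\phi^*\phi$ over an exhaustion by finite sets and passes to the limit by Montel's theorem applied to the locally uniformly bounded factors $u_l$. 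With step one carried out in that form, your proof is complete and is the same proof as in the cited reference.
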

 Definition \ref{def6.1} and Theorem \ref{thm6.1} 
can be extended
to the $\hinfdel$ setting. Recall Definition~\ref{def1.19}:

{\em
Let $\phi$ be a function on $\gdel$. We say a $4$-tuple $(a,\beta,\gamma,D)$ is a realization for $\phi$ if $a \in \mathbb{C}$ and there exists a decomposed Hilbert space, $\m = \oplus_{l=1}^m \m_l$, such that the $2\times2$ matrix,
$$V=\begin{bmatrix}a&1\otimes\beta\\\gamma\otimes1&D\end{bmatrix},$$
acts isometrically on $\mathbb{C}\oplus\m$, $\delta(\lambda)$ acts on $\m$ via the formula,
$$\delta(\lambda)(\oplus_{l=1}^m x_l)=\oplus_{l=1}^m \delta_l(\lambda)x_l,$$
and
$$\phi(\lambda)=a+\langle \delta(\lambda){(1-D\delta(\lambda))}^{-1}\gamma,\beta \rangle$$
for all $\lambda \in \gdel$.
}

We adopt the notation $\cdel$ for the hereditary cone in $\mathcal{R}(\gdel)$ generated by the elements $1-{\delta_1}^*\delta_1, \ldots,1-{\delta_m}^*\delta_m$.
\begin{thm}\label{thm6.2}
Let $\phi$ be a function defined on $\gdel$. The following are equivalent.
\begin{align*}
&a) \qquad \phi \in ball(\hinfdel);\\
&b) \qquad \normmdel{\phi} \leq 1 ; \\
&c) \qquad 1-\phi^*\phi \in \cdel;\\
&d) \qquad \phi \text{ has a realization.}
\end{align*}
\end{thm}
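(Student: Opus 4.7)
The plan is to establish the cycle (a)$\Rightarrow$(b)$\Rightarrow$(d)$\Rightarrow$(c)$\Rightarrow$(a). The implication (a)$\Rightarrow$(b) is essentially free: if $T$ is a commuting $d$-tuple with $\|\delta_l(T)\|<1$ for every $l$, the spectral mapping argument used in Lemma~\ref{lem2.3} forces $\delta_l(\sigma(T))=\sigma(\delta_l(T))\subseteq\mathbb{D}$, so $\sigma(T)\subseteq\gdel$ and $T\in\fdel^0$. The supremum defining $\normmdel{\phi}$ is therefore taken over a subset of $\fdel^0$, giving $\normmdel{\phi}\le\normdel{\phi}\le 1$. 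The implication (b)$\Rightarrow$(d) is the Ambrozie--Timotin realization theorem \cite{at03} quoted in Section~\ref{subsec1.5}, so no new work is needed there.

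For (d)$\Rightarrow$(c), I would set $u(\lambda)=(1-D\delta(\lambda))^{-1}\gamma$ and $v(\lambda)=\delta(\lambda)u(\lambda)$, both holomorphic on $\gdel$ since $\|\delta(\lambda)\|<1$ and $\|D\|\le 1$, and read off from $V^*V=I$ the three identities $|a|^2+\|\gamma\|^2=1$, $a\beta+D^*\gamma=0$, $\beta\beta^*+D^*D=I$. Combined with the recursion $u(\lambda)=\gamma+Dv(\lambda)$, a direct manipulation of $1-(\overline a+v(\mu)^*\beta)(a+\beta^*v(\lambda))$ produces
$$
1-\overline{\phi(\mu)}\phi(\lambda)
\ =\ \bigl(\gamma+Dv(\mu)\bigr)^*\bigl(\gamma+Dv(\lambda)\bigr)-v(\mu)^*v(\lambda)
\ =\ u(\mu)^*\bigl(I-\delta(\mu)^*\delta(\lambda)\bigr)u(\lambda).
$$
Writing $u=\oplus_l u_l$ with each $u_l:\gdel\to\m_l$ holomorphic, the right-hand side diagonalises as
$$
\sum_{l=1}^m\bigl(1-\overline{\delta_l(\mu)}\delta_l(\lambda)\bigr)\langle u_l(\lambda),u_l(\mu)\rangle_{\m_l},
$$
and each inner-product kernel lies in $\pofgdel$ by Proposition~\ref{prop3.1}. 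Hence $1-\phi^*\phi\in\cdel$.

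For (c)$\Rightarrow$(a), write $1-\phi^*\phi=\sum_i h_i a_i$ with each $h_i=1-\delta_{l_i}^*\delta_{l_i}$ and $a_i\in\pofgdel$, and fix $T\in\fdel^0$. Each $h_i$ is self-adjoint in $\rofgdel$, and $h_i(T)=I-\delta_{l_i}(T)^*\delta_{l_i}(T)\ge 0$ because $T\in\fdel$. Lemma~\ref{haoftpos} therefore yields $(h_ia_i)(T)\ge 0$; summing and using $(1-\phi^*\phi)(T)=I-\phi(T)^*\phi(T)$ in the hereditary calculus gives $\|\phi(T)\|\le 1$, and then taking the supremum over $\fdel^0$ delivers $\normdel{\phi}\le 1$.

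The only real computation is the algebraic step in (d)$\Rightarrow$(c); the conceptual point I expect to be most important is that the cycle never requires evaluating a realization at an operator with $\|\delta_l(T)\|=1$. The hereditary cone $\cdel$ carries positivity across that boundary automatically, which is precisely what upgrades the tautological inequality $\normmdel{\phi}\le\normdel{\phi}$ (following from (a)$\Rightarrow$(b)) to the equality hidden inside Theorem~\ref{thm6.2}.
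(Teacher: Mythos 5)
Your argument is correct, and it completes the cycle with a slightly different division of labour than the paper. The paper imports the entire equivalence $(b)\Leftrightarrow(c)\Leftrightarrow(d)$ wholesale from Ambrozie--Timotin's Theorem~3 and then only has to connect $(a)$ to the rest: it gets $(a)\Rightarrow(b)$ from Lemma~1 of \cite{at03} (the same spectral-mapping observation you make via Lemma~\ref{lem2.3}), and $(c)\Rightarrow(a)$ by noting that $\fdel^\perp(\gdel)$ is a hereditary cone containing the generators $1-\delta_l^*\delta_l$, hence contains $\cdel$ by Lemma~\ref{lem4.3}. You instead quote only the one genuinely deep implication $(b)\Rightarrow(d)$ from \cite{at03} and supply the remaining steps by hand: your $(c)\Rightarrow(a)$ is exactly the Lemma~\ref{lem4.3}/Lemma~\ref{haoftpos} argument unrolled, and your $(d)\Rightarrow(c)$ is the standard reverse lurking-isometry computation, which the paper never writes out. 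The polarized isometry identity does give $1-\overline{\phi(\mu)}\phi(\lambda)=\langle(I-\delta(\mu)^*\delta(\lambda))u(\lambda),u(\mu)\rangle$ with $u(\lambda)=(1-D\delta(\lambda))^{-1}\gamma$ holomorphic on $\gdel$ (since $\|D\|\le1$ and $\max_l|\delta_l(\lambda)|<1$ there), and the block-diagonal action of $\delta(\lambda)$ on $\m=\oplus_l\m_l$ turns this into the finite sum $\sum_l(1-\delta_l^*\delta_l)\,\langle u_l(\lambda),u_l(\mu)\rangle$ with each kernel in $\pofgdel$ by Proposition~\ref{prop3.1}, so membership in $\cdel$ is immediate. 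What your version buys is a more self-contained and more transparent proof of the ``easy'' directions, making visible exactly where the hereditary-cone formalism carries positivity from the open condition $\|\delta_l(T)\|<1$ to the closed one $\|\delta_l(T)\|\le1$; what it does not change is that the one hard step, producing a realization from the norm bound, still rests on \cite{at03} in both treatments.
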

\begin{proof}
The equivalence of {\em (b), (c)} and {\em (d)} is a special case of Theorem 3 in
the paper \cite{at03} by Ambrozie and Timotin; see also \cite{babo04}.

By Lemma 1 of \cite{at03}, if 
\begin{equation}
\label{eq737}
\| \delta_l (T) \| < 1, \ 1 \leq l \leq m,
\end{equation}
then
$\sigma(T) \subset \gdel$, so tuples satisfying (\ref{eq737}) lie in $\fdel$, and hence
$\normmdel{\phi} \leq \normdel{\phi}$. This means $\fdel^\perp (\gdel)
\subseteq \cdel$. The other inclusion follows from Lemma~\ref{lem4.3}.
\end{proof}

\section{Oka Mappings}
\label{sec7}

In this section we shall prove the theorems described in the introduction. Theorems \ref{thm1.2}, \ref{thm1.3}, and \ref{thm1.4} will be deduced from Theorem \ref{thm1.5}.
\subsection{The proof of Theorem \ref{thm1.5}}
First suppose $\phi \in ball(\hinfm)$. By Theorem \ref{thm6.2}, $\phi$ has a realization, $(a,\beta,\gamma,D)$, such that
\be \label{eq7.1}
\phi(\lambda)=a+{\langle \delta(\lambda){(1-D\delta(\lambda))}^{-1}\gamma,\beta \rangle }_\m
\ee
for all $\lambda \in \gdel$. It follows that if we define $\Phi$ on $\dm$ by
\be \label{eq7.2}
\Phi(z)=a+{\langle z{(1-Dz))}^{-1}\gamma,\beta \rangle }_\m,
\ee
then $\Phi \in ball(\hinfm)$ (Theorem \ref{thm6.1}) and $\phi(\lambda)=\Phi(\delta(\lambda))$ for all $\lambda \in \gdel$. This proves parts a) and c) of Theorem \ref{thm1.5}.

To prove part b), assume that $\Phi \in \hinfm$ and $\normm{\Phi} = 1$. Define a function $\phi$ on $\gdel$ by the formula, $\phi(\lambda)=\Phi(\delta(\lambda))$. By Theorem \ref{thm6.1}, there exists a realization $(a,\beta,\gamma,D)$ for $\Phi$ such that \eqref{eq7.2} holds for all $z \in \dm$. Hence \eqref{eq7.1} holds for all $\lambda \in \gdel$ and we see via Theorem \ref{thm6.2} that $\phi \in \hinfdel$ and $\normdel{\phi} \le 1$. Hence, $\normdel{\phi} \le 1 = \normm{\Phi}$. This proves b) and completes the proof of  Theorem \ref{thm1.5}.

\subsection{The Proof of Theorem \ref{thm1.4}}
Let $\delta$ be an $m$-tuple of polynomials in $d$ variables and assume that $\phi$ is holomorphic on a neighborhood of $\kdel$.

First assume that $\Phi$ is holomorphic on a neighborhood of $\cldm$ and $\phi = \Phi\circ\delta$. Fix $\epsilon > 0$. Using Lemma \ref{lem6.2} choose $t <1$, sufficiently close to $1$, so that $\normm{\Phi(\frac{z}{t})} < \normm{\Phi} + \epsilon$. Define $\Psi$ by setting $\Psi(z)=\Phi(\frac{z}{t})$. Evidently, as $\Psi \in \hinfm$ and $\phi(\lambda) = \Phi(\delta(\lambda)) = \Psi(t\delta(\lambda))$, by part b) of Theorem \ref{thm1.5}, $\normtdel{\phi} \le \normm{\Psi}$. Thus,
$$\rho(\phi) \le \normtdel{\phi} \le \normm{\Psi} \le \normm{\Phi}+\epsilon.$$
As $\epsilon$ is arbitrary, this proves the first assertion made in Theorem \ref{thm1.4}.

To prove the second assertion of the theorem, first fix $\epsilon > 0$. As
$$\lim_{t \to 1^-}\normtdel{\phi} = \rho(\phi),$$
there exists $t<1$ such that $\normtdel{\phi} < \rho(\phi) + \epsilon$. By part c) of Theorem \ref{thm1.5}, there exists $\Psi \in \hinfm$ such that $\phi(\lambda) = \Psi(t\delta(\lambda))$ and $\normm{\Psi} = \normtdel{\phi}$. Finally, note that if $\Phi$ is defined by $\Phi(z) = \Psi(tz)$ then as $t<1$, $\normm{\Phi}<\normm{\Psi}$. With these constructions we have that $\phi = \Phi \circ \delta$ and
$$\normm{\Phi} \le \normm{\Psi} = \normtdel{\phi} < \rho(\phi) + \epsilon.$$
This completes the proof of Theorem \ref{thm1.4}.

\section{Remarks}
\label{sec8}

It is worth noting that the norm $\normdel{\phi}$ is always achieved by
taking the supremum of $\phi(T)$ 
as $T$ ranges over tuples of simultaneously diagonalizable matrices in $\fdel$.
Indeed, in \cite{lewe92}, it was asked whether (\ref{eq1.6})  could hold for some 
generic $C$ (generic means all the eigenvalues are distinct).
It was shown that it could in \cite{lot94} and \cite{lotste}.
The existence of such a $C$ would also follow from the non-generic examples in
\cite{cradav} or \cite{var74} and the following theorem.

\begin{thm}
\label{thm8.1}
Let $\phi$ be a function defined on $\gdel$. Then
\begin{equation}
\label{eq88}
\normdel{\phi} \ = \
\sup \{ \| \phi(T) \| \ : \ T\ {\rm is\ a\ } d{\rm -tuple\ of\ 
generic\ matrices\ in\ } \fdel \}.
\end{equation}
\end{thm}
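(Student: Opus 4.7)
The inequality $\geq$ in (\ref{eq88}) is immediate, since every $d$-tuple of generic matrices in $\fdel$ with $\sigma(T) \subset \gdel$ is admissible in the supremum defining $\normdel{\phi}$. For the reverse inequality, by homogeneity it suffices to prove that if $\|\phi(T)\| \leq 1$ for every such generic matrix tuple, then $\normdel{\phi} \leq 1$. By Theorem \ref{thm6.2} this amounts to showing $1 - \phi^*\phi \in \cdel$, so that is what I would aim to establish.

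My plan is a Hahn--Banach separation followed by the construction of a finite-dimensional model. Suppose for contradiction that $1 - \phi^*\phi \notin \cdel$; since $\cdel$ is a closed convex cone in the locally convex space $\rofg$, there is a continuous linear functional $\mu$ on $\rofg$ with $\mu \geq 0$ on $\cdel$ and $\mu(1 - \phi^*\phi) < 0$. A GNS/Kolmogorov factorization (of the kind used in \cite{at03}) converts $\mu$ into an operator model: a Hilbert space $\h_\mu$ carrying a commuting $d$-tuple $T_\mu$ with $\|\delta_l(T_\mu)\| \leq 1$ and $\sigma(T_\mu) \subset \gdel$, together with a cyclic vector $\xi_\mu$ satisfying $\mu(h) = \langle h(T_\mu)\xi_\mu,\xi_\mu\rangle$ on a dense subcone. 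In particular $\|\phi(T_\mu)\xi_\mu\| > \|\xi_\mu\|$, and the goal becomes to replace $T_\mu$ by a generic matrix tuple $\tilde T \in \fdel^0$ while preserving the strict inequality.

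To descend to finite dimensions, I would first approximate $\phi$ uniformly on $\sigma(T_\mu) \Subset \gdel$ by a polynomial $p$ of some degree $k$ for which $\|p(T_\mu)\xi_\mu\| > \|\xi_\mu\|$ still holds; then $p(T_\mu)\xi_\mu$ depends only on the finite-dimensional subspace spanned by $\{T_\mu^\alpha \xi_\mu : |\alpha|\leq k\}$. One next assembles a commuting matrix tuple on a finite-dimensional space reproducing the relevant inner products. The cleanest vehicle is the structured isometric $2\times 2$ dilation supplied by the realization of Theorem \ref{thm6.2} (applied to $p$ after rescaling): the associated commuting tuple lives on a direct sum $\m = \oplus_l \m_l$ via coordinate multiplication by $\delta_l$, and truncating each $\m_l$ to a sufficiently large finite-dimensional subspace yields a block-diagonal model in $\fdel$ whose action on the distinguished vector reproduces that of $T_\mu$ on the cyclic subspace above. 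Finally, a small similarity together with a small eigenvalue perturbation makes the truncated tuple simultaneously diagonalizable with distinct joint eigenvalues, i.e.\ generic; continuity of $p$ and of each $\delta_l$-operator norm in the matrix entries preserves both $\fdel$-membership and the strict inequality, contradicting the hypothesis.

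The main obstacle is the truncation step, because a naive compression to the cyclic subspace generated by $\{T_\mu^\alpha \xi_\mu\}$ destroys commutativity (that subspace is not $T_\mu$-invariant). The key to overcoming this is to work with the realization model of Theorem \ref{thm6.2} rather than with $T_\mu$ directly: the coordinate-multiplication structure on $\oplus_l \m_l$ is manifestly commuting and stays so after truncating each summand separately, and the isometry of the realization matrix supplies the operator-norm bounds $\|\delta_l(\tilde T)\| \leq 1$ up to a negligible correction absorbed into the separation margin. The subsequent genericity adjustment is comparatively routine, since within the simultaneously diagonalizable commuting matrix tuples of a given dimension lying in $\fdel^0$, those with distinct joint eigenvalues form an open dense subset.
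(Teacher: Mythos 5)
Your reduction of the theorem to the statement that $1-\phi^*\phi\in\cdel$ (via Theorem \ref{thm6.2}), and your closing observation that commuting diagonalizable tuples can be perturbed to generic ones, both match the paper. But the core of your argument --- separate $1-\phi^*\phi$ from $\cdel$ in $\rofg$, build an infinite-dimensional GNS model $T_\mu$, then truncate it to a generic matrix tuple --- breaks down exactly at the step you yourself flag as the main obstacle, and the fix you propose does not work. First, the realization of Theorem \ref{thm6.2} is not available to you: having a realization is \emph{equivalent} to $1-\phi^*\phi\in\cdel$, which is precisely what you are trying to prove, and the same circularity afflicts applying it to the polynomial approximant $p$ (whose membership in $ball(\hinfdel)$ after rescaling is again the conclusion, not a hypothesis). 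Second, even granting a realization, it does not supply a commuting $d$-tuple of operators on $\m=\oplus_l\m_l$: in Definition \ref{def1.19} the object acting on $\m$ is the scalar diagonal operator $\delta(\lambda)$ for each \emph{fixed} point $\lambda$, not an operator model of the coordinates, so there is nothing there to truncate. Third, the GNS step is itself unjustified as stated: a functional $\mu$ nonnegative on $\cdel$ need not be nonnegative on $\pofg$ (the cone $\cdel$ consists of sums $\sum_l(1-\delta_l^*\delta_l)a_l$ with $a_l\in\pofg$ and does not obviously contain $\pofg$), so the form $\mu(g^*f)$ need not be positive semidefinite; and even when it is, boundedness of the coordinate multipliers and the containment $\sigma(T_\mu)\subset\gdel$ require separate arguments. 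You also assume without proof that $\cdel$ is closed in $\rofg$.

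The paper runs the Hahn--Banach argument in the opposite, and much easier, direction, which avoids any infinite-dimensional model. A $d$-tuple of commuting diagonalizable matrices in $\fdel$ with joint eigenvalues $\lambda_1,\dots,\lambda_n$ and Gram matrix $K$ of the common eigenvectors is \emph{exactly} the data of a finite subset of $\gdel$ together with a positive matrix $K$ satisfying $\left[(1-\overline{\delta_l(\lambda_i)}\delta_l(\lambda_j))K_{ij}\right]\ge 0$ for each $l$; the hypothesis on generic (hence, after perturbation, diagonalizable) tuples then says that $\left[(1-\overline{\phi(\lambda_i)}\phi(\lambda_j))K_{ij}\right]\ge 0$ for all such data. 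Cone duality on each finite set --- a finite-dimensional Hahn--Banach argument --- together with the normal-families patching of \cite[Sec.~11.1]{ampi} then yields $1-\phi^*\phi\in\cdel$ directly. To salvage your route you would have to replace the appeal to the realization by an honest finite-dimensional model extracted from the separating functional restricted to a finite set of points, which is in effect the paper's argument.
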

\begin{proof}
The inequality $\geq$ is obvious. Assume the right-hand side of (\ref{eq88}) is $1$.
As commuting {\em diagonalizable} matrices can be perturbed to commuting generic matrices
(this need not be true for non-diagonizable matrices \cite{gu92}), then 
\[
\sup \{ \| \phi(T) \|  :  T\ {\rm is\ a\ } d{\rm -tuple\ of\ 
commuting\ diagonalizable\ 
 matrices\ in\ } \fdel \}
\]
is also $1$.
If $T$ is a commuting diagonizable $d$-tuple of $n$-by-$n$ matrices, we can choose common
eigenvectors $v_1, \dots , v_n$ so that
\[
T^r v_j \ = \
\l^r_j v_j, \qquad 1 \leq r \leq d, \ 1 \leq j \leq n.
\]
Let 
$K$ be the Gram matrix $K_{ij} = \langle v_j, v_i \rangle$. The assertion
$\| \delta_l(T) \| \leq 1$ is the same as
\begin{equation}
\label{eq82}
\left[ (1 - \overline{\delta_l(\l_i)} \delta_l (\l_j) ) K_{ij} \right]
\ \geq \ 0 .
\end{equation}
Thus we have: whenever $\l_1, \dots, \l_n$ is a finite set in $\gdel$,
and $K$ is a positive definite matrix such that (\ref{eq82}) holds for
$1 \leq l \leq m$, then 
\[
\left[ (1 - \overline{\phi(\l_i)} \phi (\l_j) ) K_{ij} \right]
\ \geq \ 0 .
\] 
By the usual Hahn-Banach argument (see \cite[Sec. 11.1]{ampi}), this proves that 
$1 - \phi^* \phi$ is in $\cdel$, and hence $\normdel{\phi} \leq 1$.
\end{proof}


%

\bibliography{../references}.
\end{document}